\documentclass[a4paper,12pt]{amsart}

\usepackage{amsmath}
\numberwithin{equation}{section}
\usepackage[utf8]{inputenc}
\usepackage[T1]{fontenc}
\usepackage[margin=1.05in]{geometry}
\usepackage{amsmath,amssymb,amscd,amsthm,mathrsfs}
\usepackage{tikz-cd}
\usepackage{hyperref}
\usepackage{xcolor}
\usepackage{booktabs,tabularx,array}
\newtheorem{theorem}{Theorem}[section]
\newtheorem{proposition}[theorem]{Proposition}
\newtheorem{lemma}[theorem]{Lemma}
\newtheorem{corollary}[theorem]{Corollary}

\theoremstyle{definition}
\newtheorem{definition}[theorem]{Definition}

\newcommand{\OO}{\mathcal{O}}

\newcommand{\cF}{\mathcal{F}}

\newcommand{\End}{\operatorname{End}}

\newcommand{\Hom}{\operatorname{Hom}}
\newcommand{\PP}{\mathbb{P}}
\newcommand{\Res}{\operatorname{Res}}
\newcommand{\Spec}{\operatorname{Spec}}
\newcommand{\Sym}{\operatorname{Sym}}
\newcommand{\Tot}{\operatorname{Tot}}
\newcommand{\degc}{\operatorname{deg}}
\newcommand{\id}{\mathrm{id}}
\newcommand{\lengthop}{\operatorname{length}}
\newcommand{\rk}{\operatorname{rk}}
\newcommand{\sheafHom}{\mathcal{H}om}

\newcommand{\evres}{\operatorname{evres}}

\newcommand{\CC}{\mathbb C}
\newcommand{\longtwoheadrightarrow}{\mathrel{\relbar\joinrel\twoheadrightarrow}}
\newcommand{\longhookrightarrow}{\mathrel{\lhook\joinrel\longrightarrow}}

\title[Logarithmic Spectral Correspondence]{Logarithmic spectral correspondence for $V$--twisted Higgs bundles on punctured curves}

\author{Pradip Kumar}
\address{Department of Mathematics, Shiv Nadar University, NH91, Tehsil Dadri, Greater Noida, Uttar Pradesh 201314, India}
\email{Pradip.Kumar@snu.edu.in}

\subjclass[2020]{Primary 14D20, 14H40, 14D21; Secondary 14H70}
\keywords{Beauville--Narasimhan--Ramanan correspondence, twisted Higgs bundle, Hecke transform, logarithmic geometry, spectral curve}

\begin{document}
\begin{abstract}
Let \(X\) be a smooth projective complex curve, \(P\subset X\) a reduced effective divisor, and \(X^{0}=X\setminus P\). We study logarithmic \(V\)-twisted Higgs bundles arising from a logarithmic Hecke compactification of a rank-two bundle on \(X^{0}\). We show that a pair of induced logarithmic line-twisted fields lifts uniquely exactly under explicit local Hecke conditions, and that the lift is integrable precisely when the fields commute. Fixing the compactified spectral curve \(Y\), we classify such Higgs bundles by pairs \((F,\,\vartheta)\), where \(F\) is a rank-one torsion-free sheaf on \(Y\) and \(\vartheta\) satisfies a marked spectral condition on a finite subscheme \(Z\subset Y\). This gives a logarithmic extension of the compact rank-two spectral correspondence of~\cite{ABK} to the punctured case. On the line-bundle locus, the moduli stack is canonically equivalent to
\(
\operatorname{Pic}^{d}(Y)\times A_Z .
\)
\end{abstract}

\maketitle
\tableofcontents
\section{Introduction}\label{sec:intro}

Let \(X\) be a smooth projective complex curve over \(\mathbb C\), let \(P\subset X\) be a reduced effective divisor, and set
\[
X_0:=X\setminus P.
\]
Fix a rank-two holomorphic vector bundle \(V_0\) on \(X_0\). A \(V_0\)-twisted Higgs field on a holomorphic vector bundle \(E_0\) over \(X_0\) is a section
\[
\theta_0\in H^0\bigl(X_0,\End(E_0)\otimes V_0\bigr)
\]
satisfying
\(
\theta_0\wedge \theta_0=0.
\)

The purpose of this paper is to develop a spectral correspondence for such twisted Higgs fields in the case where the twisting bundle is defined only on the punctured curve \(X_0\). Our approach is based on a logarithmic compactification of \(V_0\) across the punctures \(P\), followed by a rank-two Hecke description of the compactified twisting bundle. This produces a framework in which the punctured problem can be studied on the compact curve \(X\), while keeping track of the additional data contributed by the punctures.

This problem lies at the intersection of several existing directions. In \cite{GGN}, the authors study \(V\)-twisted Higgs bundles for a fixed holomorphic vector bundle \(V\) on a compact projective curve. In \cite{KSZ}, Kydonakis--Sun--Zhao establish a Beauville--Narasimhan--Ramanan type correspondence for twisted Higgs \(V\)-bundles and apply it to parabolic \(\mathrm{Sp}(2n,\mathbb R)\)-Higgs moduli; in that setting, the marked structure is built directly into the Higgs bundle, and the spectral side naturally leads to parabolic, orbifold, or root-stack geometry. On the other hand, in \cite{ABK}, the compact rank-two case is analyzed under a Hecke presentation of the twisting bundle by two line bundles, and the twisted Higgs field is reconstructed from a pair of line-twisted fields satisfying finitely many compatibility conditions.

The present work may be viewed as a logarithmic extension of the rank-two Hecke picture developed in \cite{ABK}. Starting from a rank-two bundle \(V_0\) on the punctured curve \(X_0\), we first choose a logarithmic compactification across \(P\). Once this compactification is fixed, the problem fits naturally into the general compact framework of \cite{GGN}, but with new puncture-dependent features that do not appear in the compact case. Our main point is that these puncture contributions can still be incorporated without enlarging the ambient spectral category: the spectral side remains in the classical compactified Beauville--Narasimhan--Ramanan framework of rank-one torsion-free sheaves on an ordinary compactified spectral curve. The extra logarithmic information is instead recorded by finitely many scheme-theoretic conditions on a finite marked subscheme lying over the punctures and the Hecke points. In this sense, the paper is complementary to \cite{KSZ}: rather than passing to parabolic or stack-theoretic spectral objects, we retain the usual compactified spectral curve and encode the puncture behaviour through explicit marked conditions.

A further feature of the logarithmic setting is that it produces genuinely new enhancement data on the spectral side. This has no direct analogue in \cite{ABK}. In particular, after establishing the logarithmic extension of the rank-two Hecke correspondence, we isolate an additional puncture-dependent parameter space determined by a finite marked spectral scheme. This leads to the affine scheme \(A_Z\), which governs the enhancement data on the line-bundle locus.

\medskip

Our starting point is the existence of logarithmic Hecke compactifications. By Proposition~\ref{prop:existence-log-hecke}, every rank-two holomorphic bundle \(V_0\) on \(X_0\) admits a logarithmic Hecke presentation: there exist line bundles \(S,L\) on \(X\), a reduced effective divisor \(D\subset X_0\), quotient maps
\[
q_x:(S\oplus L)_x\twoheadrightarrow \mathbb C_x \qquad (x\in D),
\]
and lines \(\ell_p\subset (S\oplus L)_p\) for \(p\in P\), giving an exact sequence
\begin{equation}\label{eq:intro-log-hecke-short}
0\longrightarrow V \longrightarrow W(P)\longrightarrow T_{\mathrm{tot}}\longrightarrow 0,
\qquad W:=S\oplus L,
\end{equation}
with \(V|_{X_0}\cong V_0\) and \(T_{\mathrm{tot}}\) supported on
\(
D_{\mathrm{tot}}:=D\cup P.
\)
Thus the compactified problem is determined by the logarithmic Hecke data
\(
(S,L,D,\{q_x\},\{\ell_p\}).
\)
Once such a presentation is fixed, every section
\(
\theta\in H^0(X,\End(E)\otimes V)
\)
induces a pair of logarithmic line-twisted fields
\[
\Theta\in H^0(X,\End(E)\otimes S(P)),
\qquad
\Theta'\in H^0(X,\End(E)\otimes L(P)).
\]

Our first main result shows that this reduction is exact.

\begin{quote}
\emph{Theorem \ref{thm:A}.}
A pair \((\Theta,\Theta')\) arises from a unique section
\[
\theta\in H^0(X,\End(E)\otimes V)
\]
if and only if explicit local logarithmic Hecke constraints hold at every point of \(D_{\mathrm{tot}}\). Under these conditions, the lift \(\theta\) is a \(V\)-twisted Higgs field if and only if \(\Theta\) and \(\Theta'\) commute.
\end{quote}

The spectral theory is built from the first field \(\Theta\). Its compactified spectral curve
\(
\varphi:\overline{Y}\longrightarrow X
\)
carries, via the compactified Beauville--Narasimhan--Ramanan correspondence, a rank-one torsion-free sheaf \(\mathcal F\) such that \(E\simeq \varphi_*\mathcal F\). When \(\Theta'\) commutes with \(\Theta\), it corresponds to an \(\mathcal O_{\overline{Y}}\)-linear morphism
\[
\vartheta:\mathcal F\longrightarrow \mathcal F\otimes \varphi^*L(P).
\]
The logarithmic Hecke constraints are then encoded on the finite marked spectral scheme
\(
Z:=\varphi^{-1}(D_{\mathrm{tot}})\subset \overline{Y}.
\)  This leads to the intrinsic form of the logarithmic spectral correspondence.

\begin{quote}
\emph{Theorem \ref{thm:B-marked}.}
Assume that \(\overline{Y}\) is integral. Then logarithmic \(V\)-twisted Higgs bundles whose associated logarithmic \(S\)-twisted field has compactified spectral curve \(\overline{Y}\) are naturally classified by pairs \((\mathcal F,\vartheta)\), where \(\mathcal F\) is a rank-one torsion-free sheaf on \(\overline{Y}\) and
\[
\vartheta:\mathcal F\longrightarrow \mathcal F\otimes \varphi^*L(P)
\]
satisfies an intrinsic marked spectral condition on \(Z\).
\end{quote}

Under additional hypotheses, this intrinsic condition becomes completely explicit.

\begin{quote}
\emph{Theorem \ref{thm:B}.}
If \(Z\) is reduced, if the slope maps determined by the logarithmic Hecke data are defined at all points of \(D_{\mathrm{tot}}\), and if \(\mathcal F\) is locally free along \(Z\), then the intrinsic marked spectral condition is equivalent to a finite system of scalar equations on the fibers over \(Z\).
\end{quote}

The final part of the paper studies the line-bundle locus of the logarithmic spectral correspondence. In this case the admissible enhancement data \((\mathcal F,\vartheta)\) are controlled by a fixed affine scheme depending only on the marked spectral data and not on the chosen line bundle. More precisely, in Definition~\ref{def:fixed-affine-scheme} we introduce the \emph{affine enhancement scheme}
\[
A_Z
:=
\Spec \CC
\times_{\underline{H^0(Z,\,\varphi^*L|_Z)}}
\underline{H^0\bigl(\overline{Y},\,\varphi^*L(P)\bigr)},
\]
which depends only on \((\overline{Y},Z,b_Z)\). If
\[
\mathfrak H^{\log,\mathrm{lb}}_{\overline{Y},d}
\]
denotes the category fibered in groupoids of logarithmic spectral data \((\mathcal F,\vartheta)\) with \(\mathcal F\in \mathbf{Pic}^d(\overline{Y})\), then we prove:

\begin{quote}
\emph{Theorem~\ref{thm:stack-product}.}
For each degree \(d\), there is a canonical equivalence of categories fibered in groupoids
\[
\mathfrak H^{\log,\mathrm{lb}}_{\overline{Y},d}
\;\simeq\;
\mathbf{Pic}^d(\overline{Y})\times A_Z.
\]
Moreover, by Corollary~\ref{cor:algebraicity-enhanced-stack}, this upgrades to a canonical equivalence of algebraic stacks.
\end{quote}

Therefore, on the line-bundle locus, the logarithmic enhancement problem separates cleanly into two independent choices: a line bundle on the compactified spectral curve and a point of the affine enhancement scheme \(A_Z\).

The paper is organized as follows. Section~\ref{sec:log-hecke} introduces the logarithmic Hecke compactification data and proves the existence of logarithmic presentations of rank-two bundles on the punctured curve. Section~\ref{sec:reconstruction} proves Theorem~\ref{thm:A}. Section~\ref{sec:spectral} develops the spectral correspondence, first in the intrinsic marked form of Theorem~\ref{thm:B-marked}, and then in the scalar form of Theorem~\ref{thm:B}. Section~\ref{sec:enhanced-hitchin} studies the line-bundle locus and proves the product decomposition of Theorem~\ref{thm:stack-product}.

Throughout the paper, all curves are defined over \(\mathbb C\), all divisors are reduced effective, and all sheaves are coherent unless stated otherwise.
\section{Logarithmic Hecke compactification data and induced fields}\label{sec:log-hecke}
Let \(X\) be a smooth projective complex curve, and let
\(
P\,=\,\sum_{j\,=\,1}^m p_j
\)
be a reduced effective divisor $(m\geq 1)$. Set 
\(
X_0\,:=\,X\setminus P\)  and \(j:X_0\longhookrightarrow X,
\)
and fix a holomorphic vector bundle \(V_0\) of rank \(2\) on \(X_0\).

For any holomorphic vector bundle \(M\) on \(X\), write
\(
M(P)\,:=\,M\otimes \OO_X(P).
\)
Since \(P\) is reduced, the quotient \(M(P)/M\) gives principal parts of meromorphic sections with at most simple poles along \(P\).  Equivalently, there is a natural short exact sequence
\begin{equation}\label{eq:principal-parts}
0\longrightarrow M\longrightarrow M(P)\xrightarrow{\ \operatorname{pp}_P^M\ }\bigoplus_{p\in P} M_p\longrightarrow 0,
\end{equation}
where \(M_p\,:=\,M\otimes_{\OO_X}\kappa(p)\) denotes the fiber of \(M\) at \(p\).  For each \(p\in P\), we write
\[
\beta_p^M:M(P)_p\longrightarrow M_p
\]
for the corresponding fiber map.  In a local coordinate \(z\) centered at \(p\), and for a local frame \(e_1,\dots,e_r\) of \(M\), the map \(\beta_p^M\) is given by
\[
\sum_{i\,=\,1}^r\left(\frac{a_{i,-1}}{z}+a_{i,0}+a_{i,1}z+\cdots\right)e_i
\longmapsto
\sum_{i\,=\,1}^r a_{i,-1}\,e_i(p).
\]

\subsection{Logarithmic Hecke presentations}
Fix line bundles \(S\) and \(L\) on \(X\), and set
\[
W\,:=\,S\oplus L.
\]
Let
\(
D\,=\,\sum_{i\,=\,1}^{\ell}x_i
\)
be a reduced effective divisor contained in \(X_0\), so that \(D\cap P\,=\,\varnothing\).  For each \(x\in D\), choose a nonzero one-dimensional quotient
\[
q_x:W_x\,=\,(S_x\oplus L_x)\longtwoheadrightarrow \mathbb C_x,
\]
and define
\(
T_D\,:=\,\bigoplus_{x\in D}\mathbb C_x.
\)
For each puncture \(p\in P\), choose a one-dimensional subspace
\(
\ell_p\subset W_p\,=\,(S\oplus L)_p,
\)
and set
\(
Q_p\,:=\,W_p/\ell_p.
\)
Applying \eqref{eq:principal-parts} to \(M\,=\,W\) gives a natural surjection
\[
\operatorname{pp}_P^W:W(P)\longtwoheadrightarrow \bigoplus_{p\in P}W_p.
\]

The data \(\{q_x\}_{x\in D}\) and \(\{\ell_p\}_{p\in P}\) determine a torsion sheaf
\(
T_{\mathrm{tot}}\,:=\,T_D\oplus \bigoplus_{p\in P}Q_p
\)
supported on
\(
D_{\mathrm{tot}}\,:=\,D\cup P,
\)
and a surjective morphism
\[
\xi:W(P)\longrightarrow T_{\mathrm{tot}}.
\]
At a point \(x\in D\), the stalk map \(\xi_x:W(P)_x\,\longrightarrow\, (T_{\mathrm{tot}})_x\) is the chosen quotient \(q_x\), using the canonical identification \(W(P)_x\,\simeq\, W_x\).  At a puncture \(p\in P\), the stalk map \(\xi_p\) is the composition
\[
W(P)_p\xrightarrow{\ \beta_p^W\ } W_p\longtwoheadrightarrow Q_p.
\]

\begin{definition}[Logarithmic Hecke presentation]\label{def:log-hecke}
A \emph{logarithmic Hecke presentation} of \(V_0\) is a choice of data \(S,L,D,\{q_x\}_{x\in D},\{\ell_p\}_{p\in P}\) as above together with an exact sequence
\begin{equation}\label{eq:log-hecke}
0\longrightarrow V \xrightarrow{\ \Phi\ } W(P)\xrightarrow{\ \xi\ } T_{\mathrm{tot}}\longrightarrow 0,
\end{equation}
such that \(V|_{X_0}\,\cong\, V_0\).
\end{definition}

\smallskip

Because \(X\) is a smooth curve and \(T_{\mathrm{tot}}\) is a torsion sheaf, the kernel \(V\,=\,\ker(\xi)\) is torsion-free, hence locally free.  Since \(W(P)\) has rank \(2\), so does \(V\).  Its degree is
\[
\degc(V)\,=\,\degc\bigl(W(P)\bigr)-\lengthop(T_{\mathrm{tot}})
\,=\,\degc(S)+\degc(L)+|P|-|D|.
\]

\medskip

\subsection{Existence and dependence on auxiliary data}
\label{sec:existence-canonicity}

We next address the basic existence question: given \(V_0\), can one always realize it by a logarithmic Hecke presentation?  The answer is yes.

\begin{proposition}\label{prop:existence-log-hecke}
Every rank-two holomorphic vector bundle \(V_0\) on \(X_0\) admits a logarithmic Hecke presentation.
\end{proposition}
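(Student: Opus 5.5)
The plan is to construct the presentation in three stages. First extend \(V_0\) to a rank-two bundle on \(X\). Then use an elementary modification at \(P\) to produce the subbundle that will play the role of \(W\). Finally make that bundle split by a further elementary modification along an auxiliary reduced divisor \(D\subset X_0\).

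\emph{Step 1 (extension).} Produce some rank-two vector bundle \(V_1\) on \(X\) with \(V_1|_{X_0}\cong V_0\). If \(V_0\) is algebraic, take any coherent extension \(j_*\)-lattice, for example a coherent subsheaf of \(j_*V_0\) generating it on \(X_0\). Such a lattice is torsion-free, hence locally free. If \(V_0\) is only holomorphic, I would invoke the Grauert–Röhrl theorem: holomorphic bundles on the open Riemann surface \(X_0\) are trivial, so \(V_0\cong\OO_{X_0}^{2}\) and we may take \(V_1=\OO_X^2\). This is the one analytic input. I expect it to be the main point needing care, since a holomorphic bundle on \(X_0\) need not extend by itself without such a triviality or meromorphic-growth argument.

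\emph{Step 2 (modification at \(P\)).} For each \(p\in P\) choose a line \(m_p\subset (V_1)_p\), and set
\[
W':=\ker\Bigl(V_1\to \textstyle\bigoplus_{p\in P}(V_1)_p/m_p\Bigr).
\]
Then \(W'\subset V_1\subset W'(P)\), and \(V_1/W'\) has length one at each \(p\). Hence \(V_1/W'\) is a line in \(W'(P)/W'\cong\bigoplus_p W'_p\). This is exactly the local shape required by \(\xi_p=(W(P)_p\to W_p\to Q_p)\). The underlying fact is that any colength-one submodule of a free rank-two module over a DVR contains \(\mathfrak m\) times that module, so near \(p\) the submodule \(V\) must sit as \(W\subset V\subset W(P)\), with \(V/W=\ell_p\).

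\emph{Step 3 (splitting via \(D\)).} Choose a line subbundle \(A\subset W'\) with quotient line bundle \(B\). The extension class lies in \(H^1(X,B^{\vee}\otimes A)\). Now choose a reduced effective divisor \(D\subset X_0\) of large degree, so that \(H^1(X,B^\vee\otimes A(D))=0\); this is possible by Serre vanishing, choosing the points generically away from \(P\). Push out the extension along \(A\hookrightarrow A(D)\) to get \(W''\). Its extension class vanishes, so \(W''\cong A(D)\oplus B\). Moreover \(W'\subset W''\) with \(W''/W'\cong A(D)/A\cong\bigoplus_{x\in D}\mathbb C_x\).

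\emph{Step 4 (assembly).} Set \(S:=A(D)\), \(L:=B\), \(W:=S\oplus L=W''\), and take \(V:=V_1\). Away from \(D_{\mathrm{tot}}\) we have \(V=W'=W=W(P)\). Near \(x\in D\) we have \(V=W'\subset W=W(P)\) with one-dimensional quotient, which defines \(q_x\). Near \(p\in P\) we have \(W=W'\subset V\subset W(P)\), and \(\ell_p:=V/W\subset W_p\) via \(\beta_p^W\). Then \(\xi\) built from \(\{q_x\},\{\ell_p\}\) has kernel exactly \(V\), giving the exact sequence \eqref{eq:log-hecke} with \(V|_{X_0}\cong V_0\). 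The only verification left is the routine local check that the stalk maps match the conventions for \(\xi_x\) and \(\xi_p\) fixed above.
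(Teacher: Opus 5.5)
Your proof is correct, but it takes a more general route than the paper. The paper uses the same analytic input as your Step 1 (Oka--Grauert, so $V_0\cong\OO_{X_0}^{\oplus 2}$). It then writes down a single explicit presentation: $S=\OO_X$, $L=\OO_X(-P)$, $D=\varnothing$, $\ell_p=\{0\}\oplus L_p$. This gives $W(P)=\OO_X(P)\oplus\OO_X$, and $\xi$ only records the polar coefficient of the first component, so a one-line local computation shows $\ker\xi=\OO_X^{\oplus 2}$. Your Steps 2--3 collapse to exactly this presentation when $V_1=\OO_X^{\oplus 2}$ and $m_p$ is the first coordinate line. In that case $W'=\OO_X\oplus\OO_X(-P)$ is already split, so no auxiliary divisor is needed, and $V_1/W'$ at $p$ is the line $\{0\}\oplus L_p$ in $W'(P)/W'$.

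What your argument buys is independence from the triviality of $V_0$. Steps 2--4 show that every rank-two extension $V_1$ of $V_0$ across $P$, with arbitrarily prescribed lines $m_p$, is the kernel of some logarithmic Hecke presentation. This covers, for instance, any lattice in $j_*V_0$ in the algebraic setting, where $V_0$ need not be trivial because $\det V_0$ can be nontrivial in $\operatorname{Pic}(X_0)$ once $g\ge 1$. The cost is the auxiliary divisor $D$ and the push-out/$H^1$-vanishing step. For that step no genericity is needed: any reduced $D\subset X_0$ with $\deg D>2g-2-\deg(B^{\vee}\otimes A)$ works. The paper's argument is shorter and has $D=\varnothing$, but it realizes only the single compactification $V\cong\OO_X^{\oplus 2}$.
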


\begin{proof}
Since \(X_0\) is an open Riemann surface, it is Stein.  By the Oka--Grauert principle, every rank-two holomorphic vector bundle on \(X_0\) is holomorphically trivial.  Choose a trivialization
\[
\tau:V_0\xrightarrow{\sim}\OO_{X_0}^{\oplus 2}.
\]

Set
\(
S\,=\,\,\OO_X\), and  \(L\,=\,\,\OO_X(-P),
\)
so that
\(
W(P)\,=\,(S\oplus L)(P)\,=\,\,\OO_X(P)\oplus \OO_X.
\)
Take \(D\,=\,\varnothing\).  For each \(p\in P\), define
\[
\ell_p\,=\,\{0\}\oplus L_p\subset S_p\oplus L_p\,=\,W_p.
\]
Then \(Q_p\,=\,W_p/\ell_p\,\simeq\, S_p\), and the induced map
\[
\xi:\OO_X(P)\oplus \OO_X\longrightarrow \bigoplus_{p\in P}Q_p
\]
is obtained by taking principal parts in the first summand and zero in the second summand.  Under the identification \(Q_p\,\simeq\, S_p\), this is exactly
\[
\xi\,=\,\bigoplus_{p\in P}\beta_p^S.
\]

We claim that \(\ker(\xi)\,=\,\,\OO_X\oplus \OO_X\).  Indeed, near a point \(p\in P\), a local section of \(\OO_X(P)\oplus \OO_X\) has the form
\[
\left(\frac{a_{-1}}{z}+a_0+a_1z+\cdots,\; b_0+b_1z+\cdots\right),
\]
and \(\xi\) gives only the coefficient \(a_{-1}\).  Thus the kernel condition is precisely that the first component be holomorphic at \(p\).  Since the second component is already holomorphic, the kernel is \(\OO_X\oplus \OO_X\).

Therefore \eqref{eq:log-hecke} is exact with \(V\,=\,\ker(\xi)\,\cong\, \OO_X^{\oplus 2}\).  Restricting to \(X_0\) and using \(\tau\), we obtain
\[
V|_{X_0}\,\cong\, \OO_{X_0}^{\oplus 2}\xrightarrow[\sim]{\tau^{-1}} V_0.
\]
Hence \(V_0\) admits a logarithmic Hecke presentation.
\end{proof}

Proposition~\ref{prop:existence-log-hecke} shows that existence is not restrictive in the holomorphic category.  The real issue is non-canonicity: the same bundle \(V_0\) may admit many different logarithmic Hecke presentations, depending on the auxiliary choices of \(S\), \(L\), \(D\), \(\{q_x\}\), and \(\{\ell_p\}\).

From this point on, we fix one logarithmic Hecke data: $(S,\,L,\,D,\,\{q_x\}_{x\in D},\,\{\ell_p\}_{p\in P})$ and presentation
\[
0\longrightarrow V \xrightarrow{\ \Phi\ } W(P)\xrightarrow{\ \xi\ } T_{\mathrm{tot}}\longrightarrow 0,
\qquad W\,=\,S\oplus L.
\]

\subsection{Slope maps and induced logarithmic fields}

Let
\[
p_1:W(P)\,\longrightarrow\, S(P),\qquad p_2:W(P)\,\longrightarrow\, L(P)
\]
be the natural projections.  Composing with \(\Phi\) gives bundle morphisms
\begin{equation}\label{eq:rho-hat}
\widehat{\rho}_1\,:=\,p_1\circ\Phi:V\longrightarrow S(P),\qquad
\widehat{\rho}_2\,:=\,p_2\circ\Phi:V\longrightarrow L(P).
\end{equation}

For each \(a\in D_{\mathrm{tot}}\), let
\[
\xi_a:W_a\,=\,S_a\oplus L_a\longrightarrow (T_{\mathrm{tot}})_a
\]
be the fiber map induced by \(\xi\).  Thus \(\xi_x\,=\,q_x\) for \(x\in D\), while \(\xi_p:W_p\longtwoheadrightarrow Q_p\) is the quotient map for \(p\in P\).  Since \((T_{\mathrm{tot}})_a\) is one-dimensional for every \(a\in D_{\mathrm{tot}}\), we can consider the component maps
\begin{equation}\label{eq:xi-fiber}
\xi_{1,a}:S_a\longrightarrow (T_{\mathrm{tot}})_a,\qquad
\xi_{2,a}:L_a\longrightarrow (T_{\mathrm{tot}})_a,
\end{equation}
obtained by restricting \(\xi_a\) to the two summands.

Assume that both maps in \eqref{eq:xi-fiber} are nonzero.  Since the source and target are then one-dimensional, they are isomorphisms, and there is a unique isomorphism
\begin{equation}\label{eq:rho-a}
\rho^a:S_a\longrightarrow L_a
\end{equation}
such that
\[
\xi_{2,a}\circ \rho^a\,=\,-\,\xi_{1,a}.
\]
Equivalently,
\[
\rho^a\,=\,-\,\xi_{2,a}^{-1}\circ \xi_{1,a},
\]
and \(\ker(\xi_a)\subset S_a\oplus L_a\) is the graph of \(\rho^a\).  We call \(\rho^a\) the \emph{slope map} at \(a\).

At a point \(x\in D\), this is the same fiberwise slope datum that appears in the ordinary Hecke presentation of \cite{ABK}.  At a puncture \(p\in P\), one has \(\ker(\xi_p)\,=\,\ell_p\), so the slope map \(\rho^p\) is defined exactly when \(\ell_p\) is transverse to both summands \(S_p\) and \(L_p\), or equivalently when
\[
\ell_p\cap S_p\,=\,\ell_p\cap L_p\,=\,0.
\]

Now let \(E\) be a holomorphic vector bundle on \(X\).  A \emph{\(V\)-twisted Higgs field} on \(E\) is a section
\[
\theta\in H^0\bigl(X,\,\, \End(E)\otimes V\bigr)
\]
satisfying
\begin{equation}\label{eq:integrability}
\theta\wedge\theta\,=\,0
\qquad\text{in}\qquad
H^0\bigl(X,\,\, \End(E)\otimes \wedge^2V\bigr).
\end{equation}

\begin{definition}[Logarithmic line--twisted field]
Let \(M\) be a line bundle on \(X\), and let \(E\) be a holomorphic vector bundle on \(X\).  A \emph{logarithmic \(M\)--twisted field} on \(E\) is a section of
\[
H^0\bigl(X,\,\, \End(E)\otimes M(P)\bigr).
\]
\end{definition}

Given \(\theta\in H^0(X,\,\, \End(E)\otimes V)\), composition with \eqref{eq:rho-hat} produces logarithmic line--twisted fields
\begin{equation}\label{eq:ThetaTheta}
\Theta\,:=\,(\id_{\End(E)}\otimes \widehat{\rho}_1)(\theta)\in H^0\bigl(X,\,\, \End(E)\otimes S(P)\bigr),
\end{equation}
\begin{equation}\label{eq:ThetaPrime}
\Theta'\,:=\,(\id_{\End(E)}\otimes \widehat{\rho}_2)(\theta)\in H^0\bigl(X,\,\, \End(E)\otimes L(P)\bigr).
\end{equation}
If \(E_0\,:=\,E|_{X_0}\), then the restrictions of \(\Theta\) and \(\Theta'\) to \(X_0\) are holomorphic sections of \(\End(E_0)\otimes S|_{X_0}\) and \(\End(E_0)\otimes L|_{X_0}\), respectively.

The integrability of \(\theta\) imposes a commutativity relation between the induced fields.

\begin{lemma}\label{lem:comm}
Let \(\theta\in H^0\bigl(X,\,\, \End(E)\otimes V\bigr)\) satisfy \eqref{eq:integrability}, and let \(\Theta,\,\Theta'\) be defined by \eqref{eq:ThetaTheta} and \eqref{eq:ThetaPrime}.  Then
\begin{equation}\label{eq:comm}
(\Theta'\otimes \id_{S(P)})\circ \Theta
\,=\,
(\Theta\otimes \id_{L(P)})\circ \Theta'
\end{equation}
as meromorphic homomorphisms
\[
E\longrightarrow E\otimes S(P)\otimes L(P).
\]
\end{lemma}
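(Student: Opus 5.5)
The plan is to reduce \eqref{eq:comm} to a local computation and then use the identity \(\wedge^2 W(P)\cong S(P)\otimes L(P)\). Both sides of \eqref{eq:comm} are holomorphic sections of \(\End(E)\otimes S(P)\otimes L(P)\): \(\Theta\) and \(\Theta'\) are holomorphic sections of \(\End(E)\otimes S(P)\) and \(\End(E)\otimes L(P)\), so the compositions are honest homomorphisms \(E\to E\otimes S(P)\otimes L(P)\). By the identity theorem, it therefore suffices to prove equality on the dense open set \(X\setminus D_{\mathrm{tot}}\), or even on any small open set where everything is trivialized. Alternatively one can argue globally at the sheaf level, which is what I will do.

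\textbf{Step 1: wedge with the coframe.} Compose \(\Phi\) with the determinant map. The map \(\wedge^2\Phi:\wedge^2V\to\wedge^2W(P)\) is a sheaf map, and
\[
\wedge^2 W(P)=\wedge^2\bigl(S(P)\oplus L(P)\bigr)\cong S(P)\otimes L(P),
\]
via \((s_1,l_1)\wedge(s_2,l_2)\mapsto s_1\otimes l_2-s_2\otimes l_1\). Writing \(\Phi=(\widehat\rho_1,\widehat\rho_2)\), this gives \(\wedge^2\Phi(v\wedge w)=\widehat\rho_1(v)\otimes\widehat\rho_2(w)-\widehat\rho_1(w)\otimes\widehat\rho_2(v)\).

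\textbf{Step 2: apply this to \(\theta\wedge\theta\).} Locally write \(\theta=\sum_i A_i\otimes v_i\) with \(A_i\) local sections of \(\End(E)\) and \(v_i\) local sections of \(V\). Then \(\theta\wedge\theta=\sum_{i,j}A_iA_j\otimes v_i\wedge v_j\), which is the convention in which \eqref{eq:integrability} means \([\theta,\theta]\)-vanishing. Applying \(\id\otimes\wedge^2\Phi\) gives
\[
\sum_{i,j}A_iA_j\otimes\bigl(\widehat\rho_1(v_i)\otimes\widehat\rho_2(v_j)-\widehat\rho_1(v_j)\otimes\widehat\rho_2(v_i)\bigr).
\]
Compare this with \(\Theta=\sum_i A_i\otimes\widehat\rho_1(v_i)\) and \(\Theta'=\sum_j A_j\otimes\widehat\rho_2(v_j)\). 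The first sum is \((\Theta\otimes\id)\circ\Theta'\), up to ordering convention: the composition applies \(\Theta'\) first, so its endomorphism part is \(A_iA_j\) with \(\widehat\rho_1(v_i)\otimes\widehat\rho_2(v_j)\). The second sum, after relabelling \(i\leftrightarrow j\), is \((\Theta'\otimes\id)\circ\Theta\). So the image equals \((\Theta\otimes\id)\circ\Theta'-(\Theta'\otimes\id)\circ\Theta\). Since \(\theta\wedge\theta=0\), this difference vanishes, which is \eqref{eq:comm}, using the canonical swap \(S(P)\otimes L(P)\cong L(P)\otimes S(P)\).

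\textbf{Main obstacle.} The only delicate point is bookkeeping of conventions: what \(\theta\wedge\theta\) means as a section of \(\End(E)\otimes\wedge^2V\) (composition in \(\End(E)\) combined with wedge in \(V\)), and the order of composition in \eqref{eq:comm}. I will fix these in the local frame computation above and check that any other convention only changes an overall sign, which is harmless since the conclusion is a vanishing statement. Note that no Hecke conditions at \(D_{\mathrm{tot}}\) are needed: \(\wedge^2\Phi\) need not be an isomorphism, only a map. The argument therefore holds on all of \(X\), with the meromorphic interpretation automatically satisfied.
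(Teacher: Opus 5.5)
Your proof is correct, but it takes a different route from the paper's.

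The paper's route is local and then extends by density:
\begin{itemize}
\item It restricts to \(X^\circ=X\setminus D_{\mathrm{tot}}\), where \(\Phi\) identifies \(V\) with \(S\oplus L\).
\item It writes \(\theta=A\otimes s+B\otimes\ell\) in local frames and reads off \(\theta\wedge\theta=(AB-BA)\otimes(s\wedge\ell)\).
\item It extends the resulting equality from the dense open set \(X^\circ\) to all of \(X\).
\end{itemize}

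You instead push \(\theta\wedge\theta\) forward along the global morphism \(\id_{\End(E)}\otimes\wedge^2\Phi\) into \(\End(E)\otimes S(P)\otimes L(P)\). This gives the identity
\[
(\id\otimes\wedge^2\Phi)(\theta\wedge\theta)=(\Theta\otimes\id_{L(P)})\circ\Theta'-(\Theta'\otimes\id_{S(P)})\circ\Theta
\]
on all of \(X\), including the points of \(D_{\mathrm{tot}}\). Your ordering and sign bookkeeping agrees with the paper's convention: for \(\theta=A\otimes s+B\otimes\ell\) your formula returns \((AB-BA)\otimes(s\wedge\ell)\).

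What your route buys:
\begin{itemize}
\item There is no density or identity-theorem step, and you never need to know where \(\Phi\) is an isomorphism.
\item The same identity also gives the converse direction, which is what Theorem~\ref{thm:A}(b) needs. The reason is that \(\wedge^2\Phi\) is a nonzero morphism of line bundles on an integral curve (\(\Phi\) is generically an isomorphism), hence injective, and tensoring with the locally free \(\End(E)\) preserves injectivity.
\end{itemize}

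What the paper's route buys: it obtains both directions from the equivalence \(\theta\wedge\theta=0\iff AB=BA\) on \(X^\circ\) plus density. It also makes the link to the ordinary commuting-pair picture off the marked points explicit, and that argument is reused verbatim in the proof of Theorem~\ref{thm:A}(b).

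Your remark that both sides of \eqref{eq:comm} are honest holomorphic homomorphisms \(E\to E\otimes S(P)\otimes L(P)\), not merely meromorphic ones, is also correct.
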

\begin{proof}
Set
\[
X^\circ\,:=\,X\setminus D_{\mathrm{tot}}.
\]
Since \(T_{\mathrm{tot}}\) is supported on \(D_{\mathrm{tot}}\), the sequence \eqref{eq:log-hecke} restricts over \(X^\circ\) to an isomorphism
\[
\Phi|_{X^\circ}:V|_{X^\circ}\xrightarrow{\sim} W(P)|_{X^\circ}\,\cong\, W|_{X^\circ}
\,\cong\, S|_{X^\circ}\oplus L|_{X^\circ}.
\]
With respect to this splitting, the section \(\theta|_{X^\circ}\) decomposes as
\[
\theta|_{X^\circ}\,=\,\Theta|_{X^\circ}+\Theta'|_{X^\circ},
\]
with \(\Theta|_{X^\circ}\in H^0(X^\circ,\, \End(E)\otimes S)\) and
\(\Theta'|_{X^\circ}\in H^0(X^\circ,\, \End(E)\otimes L)\).

Under the canonical identification \(\wedge^2(S\oplus L)\,\cong\, S\otimes L\), the relation \(\theta\wedge\theta\,=\,0\) becomes
\[
\Theta\wedge\Theta'+\Theta'\wedge\Theta\,=\,0
\qquad\text{on }X^\circ.
\]
Locally, choose frames \(s\) of \(S\) and \(\ell\) of \(L\), and write
\[
\Theta\,=\,A\otimes s,\qquad \Theta'\,=\,B\otimes \ell
\]
with \(A,B\in \End(E)\).  Then
\[
\theta\wedge\theta\,=\,(AB-BA)\otimes (s\wedge \ell),
\]
so \(\theta\wedge\theta\,=\,0\) is equivalent to \(AB\,=\,BA\).  This is precisely the identity
\[
(\Theta'\otimes \id_S)\circ \Theta
\,=\,
(\Theta\otimes \id_L)\circ \Theta'
\]
on \(X^\circ\). Both sides extend to meromorphic homomorphisms
\[
E\longrightarrow E\otimes S(P)\otimes L(P),
\]
and they agree on the dense open subset \(X^\circ\).  Hence they agree on all of \(X\).
\end{proof}
\section{Reconstruction from logarithmic line--twisted fields}\label{sec:reconstruction}
We continue with the fixed logarithmic Hecke presentation
\[
0\longrightarrow V \xrightarrow{\ \Phi\ } W(P)\xrightarrow{\ \xi\ } T_{\mathrm{tot}}\longrightarrow 0,
\qquad W\,=\,S\oplus L,
\]
and write \(D_{\mathrm{tot}}\,:=\,D\cup P\).  This section solves the reconstruction problem: given a pair of logarithmic line--twisted fields, determine when it comes from a section of \(\End(E)\otimes V\) and when the resulting lift is integrable.

For each \(a\in D_{\mathrm{tot}}\) and each line bundle \(M\in\{S,L\}\), define a linear map
\[
\beta_a^M:M(P)_a\longrightarrow M_a
\]
by the rule
\begin{itemize}
\item if \(a\in D\), then \(a\notin P\) and \(M(P)_a\,\simeq\, M_a\) canonically; in this case \(\beta_a^M\) is this identification;
\item if \(a\,=\,p\in P\), then \(\beta_p^M\) is the principal-part map from \eqref{eq:principal-parts}.
\end{itemize}
Thus \(\beta_a^M\) is an evaluation map at points of \(D\) and a residue map at points of \(P\).

Let \(E\) be a holomorphic vector bundle on \(X\), and let
\[
\Theta\in H^0\bigl(X,\,\, \End(E)\otimes S(P)\bigr),\qquad
\Theta'\in H^0\bigl(X,\,\, \End(E)\otimes L(P)\bigr)
\]
be logarithmic line--twisted fields.  For each \(a\in D_{\mathrm{tot}}\), we write
\[
\Theta_a:E_a\,\longrightarrow\, E_a\otimes S(P)_a,\qquad
\Theta'_a:E_a\,\longrightarrow\, E_a\otimes L(P)_a
\]
for their fiber values.

\begin{definition}[Local logarithmic Hecke constraint]\label{def:hecke-constraint}
For each \(a\in D_{\mathrm{tot}}\), define
\[
\mathsf{C}_{1,a}(\Theta)\,:=\,
(\id_{E_a}\otimes \xi_{1,a})\circ (\id_{E_a}\otimes \beta_a^S)\circ \Theta_a,
\]
\[
\mathsf{C}_{2,a}(\Theta')\,:=\,
(\id_{E_a}\otimes \xi_{2,a})\circ (\id_{E_a}\otimes \beta_a^L)\circ \Theta'_a,
\]
and
\[
\mathsf{C}_a(\Theta,\,\Theta')\,:=\,\mathsf{C}_{1,a}(\Theta)+\mathsf{C}_{2,a}(\Theta')
\in
\Hom\bigl(E_a,E_a\otimes (T_{\mathrm{tot}})_a\bigr).
\]
We say that \((\Theta,\,\Theta')\) satisfies the \emph{local logarithmic Hecke constraint at \(a\)} if
\begin{equation}\label{eq:hecke-constraint}
\mathsf{C}_a(\Theta,\,\Theta')\,=\,0.
\end{equation}
\end{definition}

At a puncture, this condition has a concrete residue interpretation.  If \(p\in P\), then \(\beta_p^S\) and \(\beta_p^L\) are residue maps, so we may identify
\[
(\id_{E_p}\otimes \beta_p^S)\Theta_p\,=\,\Res_p(\Theta),\qquad
(\id_{E_p}\otimes \beta_p^L)\Theta'_p\,=\,\Res_p(\Theta').
\]
Since \(\xi_p:W_p\longtwoheadrightarrow Q_p\,=\,W_p/\ell_p\) has kernel \(\ell_p\), the condition \eqref{eq:hecke-constraint} at \(p\) is equivalent to the statement that the residue pair
\[
\bigl(\Res_p(\Theta),\Res_p(\Theta')\bigr)\in \End(E_p)\otimes W_p
\]
lies in the distinguished line
\[
\End(E_p)\otimes \ell_p\subset \End(E_p)\otimes W_p.
\]

The next theorem is the basic reconstruction statement.

\begin{theorem}\label{thm:A}
Let \(E\) be a holomorphic vector bundle on \(X\), and let
\[
\Theta\in H^0\bigl(X,\,\, \End(E)\otimes S(P)\bigr),\qquad
\Theta'\in H^0\bigl(X,\,\, \End(E)\otimes L(P)\bigr)
\]
be a pair of logarithmic line--twisted fields.

\begin{enumerate}
\item[(a)] \textup{(lifting)} There exists a unique section
\[
\theta\in H^0\bigl(X,\,\, \End(E)\otimes V\bigr)
\]
inducing \((\Theta,\,\Theta')\) via \eqref{eq:ThetaTheta} and \eqref{eq:ThetaPrime} if and only if the local logarithmic Hecke constraints \eqref{eq:hecke-constraint} hold for every \(a\in D_{\mathrm{tot}}\).

\item[(b)] \textup{(integrability)} Under the equivalent conditions in \textup{(a)}, the lift \(\theta\) is a \(V\)-twisted Higgs field, that is, \(\theta\wedge\theta\,=\,0\), if and only if the commutativity relation \eqref{eq:comm} holds.
\end{enumerate}
\end{theorem}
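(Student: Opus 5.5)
Our plan is to tensor the logarithmic Hecke presentation \eqref{eq:log-hecke} with the locally free sheaf \(\End(E)\) and apply left exactness of \(H^0\). Since \(\End(E)\) is locally free, the sequence
\[
0\longrightarrow \End(E)\otimes V\xrightarrow{\ \id\otimes\Phi\ }\End(E)\otimes W(P)\xrightarrow{\ \id\otimes\xi\ }\End(E)\otimes T_{\mathrm{tot}}\longrightarrow 0
\]
remains exact. Taking global sections gives an injection \(H^0(\End(E)\otimes V)\hookrightarrow H^0(\End(E)\otimes S(P))\oplus H^0(\End(E)\otimes L(P))\) whose image is exactly the kernel of \(H^0(\id\otimes\xi)\). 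By \eqref{eq:rho-hat}, this injection sends \(\theta\) to \((\Theta,\Theta')\) as in \eqref{eq:ThetaTheta}, \eqref{eq:ThetaPrime}. Uniqueness in (a) is therefore injectivity, and existence reduces to showing that \((\id\otimes\xi)(\Theta,\Theta')=0\).

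Next we would identify this global obstruction pointwise. Since \(T_{\mathrm{tot}}\) is a skyscraper sheaf with one-dimensional stalk \((T_{\mathrm{tot}})_a\) at each \(a\in D_{\mathrm{tot}}\), we have \(H^0(\End(E)\otimes T_{\mathrm{tot}})=\bigoplus_a \End(E_a)\otimes(T_{\mathrm{tot}})_a\). The \(a\)-component of \((\id\otimes\xi)(\Theta,\Theta')\) is obtained by taking the fiber at \(a\) and applying \(\xi_a\). By construction \(\xi_a=q_a\) at \(a\in D\), and \(\xi_p=(W_p\to Q_p)\circ\beta_p^W\) at \(p\in P\), with \(\beta^W_a=\beta^S_a\oplus\beta^L_a\) in both cases. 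Splitting into the two summands then gives \(\mathsf C_{1,a}(\Theta)+\mathsf C_{2,a}(\Theta')=\mathsf C_a(\Theta,\Theta')\). Hence the obstruction vanishes if and only if \eqref{eq:hecke-constraint} holds for every \(a\), which proves (a).

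The one place needing care is matching the notation: \(\xi_{i,a}\) in \eqref{eq:xi-fiber} is defined on \(S_a,L_a\), while the fiber of \(W(P)\) is \(W(P)_a\), so one must check that \(\xi\) factors through \(\beta^W_a\) on fibers. At a puncture this follows because \(\xi\) kills \(W\subset W(P)\) near \(p\), so \(\xi_p\) factors through \(W(P)_p\to W_p\), the principal-part map of \eqref{eq:principal-parts}.

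For (b), the forward direction is Lemma~\ref{lem:comm}. For the converse, suppose \eqref{eq:comm} holds. On \(X^\circ=X\setminus D_{\mathrm{tot}}\), \(\Phi\) is an isomorphism onto \(S\oplus L\), and the local frame computation in the proof of Lemma~\ref{lem:comm} gives \(\theta\wedge\theta=(AB-BA)\otimes(s\wedge\ell)\) there, so \(\theta\wedge\theta|_{X^\circ}=0\). Since \(\theta\wedge\theta\) is a holomorphic section of the locally free sheaf \(\End(E)\otimes\wedge^2V\), and it vanishes on a dense open set, it vanishes identically. The main obstacle is the careful fiber identification at the punctures described above; the rest is formal.
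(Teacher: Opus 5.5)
Your proposal is correct and follows essentially the same route as the paper. For (a), you tensor the Hecke sequence with \(\End(E)\), use left exactness of \(H^0\) for existence and injectivity for uniqueness, and identify the \(a\)-components of \(H^0(\End(E)\otimes T_{\mathrm{tot}})\) with \(\mathsf C_a(\Theta,\Theta')\). For (b), you compare \(\theta\wedge\theta\) with the commutator on \(X^\circ=X\setminus D_{\mathrm{tot}}\) and extend by density. The only cosmetic difference is that you cite Lemma~\ref{lem:comm} for the forward direction of (b), whereas the paper gets both directions at once from the equivalence on \(X^\circ\).
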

\begin{proof}
For part \textup{(a)}, tensor the exact sequence \eqref{eq:log-hecke} with \(\End(E)\).  Since \(\End(E)\) is locally free, we obtain a short exact sequence of coherent sheaves
\[
0\longrightarrow \End(E)\otimes V
\longrightarrow
\End(E)\otimes W(P)
\xrightarrow{\ \Psi\ }
\End(E)\otimes T_{\mathrm{tot}}
\longrightarrow 0,
\]
where \(\Psi\,:=\,\id_{\End(E)}\otimes \xi\).  Under the decomposition
\[
\End(E)\otimes W(P)
\,\cong\,
\bigl(\End(E)\otimes S(P)\bigr)\oplus \bigl(\End(E)\otimes L(P)\bigr),
\]
set
\[
s\,:=\,\Theta\oplus \Theta'
\in H^0\bigl(X,\,\, \End(E)\otimes W(P)\bigr).
\]
By left exactness of global sections, a lift
\[
\theta\in H^0\bigl(X,\,\, \End(E)\otimes V\bigr)
\]
with image \(s\) exists if and only if \(s\in \ker H^0(\Psi)\), that is, if and only if
\[
\Psi(s)\,=\,0\in H^0\bigl(X,\,\, \End(E)\otimes T_{\mathrm{tot}}\bigr).
\]
Such a lift is unique because the map on global sections induced by \(\Phi\) is injective.

Since \(T_{\mathrm{tot}}\) is supported on the finite set \(D_{\mathrm{tot}}\), one has a canonical decomposition
\[
H^0\bigl(X,\,\, \End(E)\otimes T_{\mathrm{tot}}\bigr)
\,=\,
\bigoplus_{a\in D_{\mathrm{tot}}}
\Hom\bigl(E_a,E_a\otimes (T_{\mathrm{tot}})_a\bigr).
\]
Under this identification, the \(a\)-component of \(\Psi(s)\) is exactly
\[
\mathsf{C}_{1,a}(\Theta)+\mathsf{C}_{2,a}(\Theta')\,=\,\mathsf{C}_a(\Theta,\,\Theta').
\]
Therefore \(\Psi(s)\,=\,0\) if and only if \eqref{eq:hecke-constraint} holds for every \(a\in D_{\mathrm{tot}}\).  This proves \textup{(a)}.

For part \textup{(b)}, assume that the equivalent conditions in \textup{(a)} hold, and let \(\theta\) be the unique lift.  Set
\[
X^\circ\,:=\,X\setminus D_{\mathrm{tot}}.
\]
Since \(T_{\mathrm{tot}}|_{X^\circ}\,=\,0\), the sequence \eqref{eq:log-hecke} restricts over \(X^\circ\) to an isomorphism
\[
\Phi|_{X^\circ}:V|_{X^\circ}\xrightarrow{\sim} W(P)|_{X^\circ}\,\cong\, W|_{X^\circ}
\,\cong\, S|_{X^\circ}\oplus L|_{X^\circ}.
\]
Under this identification, \(\theta|_{X^\circ}\) corresponds to
\[
\Theta|_{X^\circ}\oplus \Theta'|_{X^\circ}.
\]
By the local computation used in the proof of Lemma~\ref{lem:comm}, the vanishing of \(\theta\wedge\theta\) on \(X^\circ\) is equivalent to the commutativity relation \eqref{eq:comm} on \(X^\circ\).

Now \(\theta\wedge\theta\) is a holomorphic section of
\[
\End(E)\otimes \wedge^2V,
\]
and the difference of the two sides of \eqref{eq:comm} is a holomorphic section of
\[
\Hom\bigl(E,E\otimes S(P)\otimes L(P)\bigr).
\]
A holomorphic section on the smooth irreducible curve \(X\) vanishes identically if and only if it vanishes on the dense open subset \(X^\circ\).  Hence \(\theta\wedge\theta\,=\,0\) on \(X\) if and only if \eqref{eq:comm} holds on \(X\).  This proves \textup{(b)}.
\end{proof}

The local constraint admits a more explicit form when the slope map is defined. Assume that for some \(a\in D_{\mathrm{tot}}\) both maps in \eqref{eq:xi-fiber} are nonzero, so that the slope map
\[
\rho^a:S_a\longrightarrow L_a
\]
is defined by \eqref{eq:rho-a}.  Since \(\xi_{2,a}\) is then an isomorphism and
\[
\rho^a\,=\,-\,\xi_{2,a}^{-1}\circ \xi_{1,a},
\]
the condition \eqref{eq:hecke-constraint} is equivalent to
\begin{equation}\label{eq:explicit-constraint}
(\id_{E_a}\otimes \beta_a^L)\circ \Theta'_a
\,=\,
(\id_{E_a}\otimes \rho^a)\circ (\id_{E_a}\otimes \beta_a^S)\circ \Theta_a
\end{equation}
in \(\Hom(E_a,\,E_a\otimes L_a)\).  This pointwise form is the version used in Section~\ref{sec:spectral}.

\section{Logarithmic compactified spectral correspondence}\label{sec:spectral}

Fix the logarithmic Hecke presentation
\[
0\longrightarrow V \xrightarrow{\ \Phi\ } W(P)\xrightarrow{\ \xi\ } T_{\mathrm{tot}}\longrightarrow 0,
\qquad W=S\oplus L,
\]
from Section~\ref{sec:log-hecke}, and let \(E\) be a holomorphic vector bundle on \(X\).
Let
\[
\Theta\in H^0\bigl(X,\, \End(E)\otimes S(P)\bigr),
\qquad
\Theta'\in H^0\bigl(X,\, \End(E)\otimes L(P)\bigr)
\]
be logarithmic line--twisted fields. 

\subsection{The first field and its compactified spectral curve}

Set
\[
X_0\,:=\,X\setminus P,
\qquad
\Theta_0\,:=\,\Theta|_{X_0}\in H^0\bigl(X_0,\, \End(E|_{X_0})\otimes S|_{X_0}\bigr).
\]
Let
\[
\pi_0:\Tot\bigl(S|_{X_0}\bigr)\longrightarrow X_0
\]
be the bundle projection, and let \(t_0\) be the tautological section of \(\pi_0^*S\).  The spectral curve of \(\Theta_0\) is the closed subscheme
\[
Y_0\subset \Tot\bigl(S|_{X_0}\bigr)
\]
defined by
\(\det\bigl(t_0-\pi_0^*\Theta_0\bigr)=0.\)

Next let
\(
\pi:\Tot\bigl(S(P)\bigr)\longrightarrow X
\)
be the total space of \(S(P)\), with tautological section
\(
t\in H^0\bigl(\Tot(S(P)),\,\pi^*S(P)\bigr).
\)
We define the compactified spectral curve of \(\Theta\) to be the closed subscheme
\(
Y\subset \Tot\bigl(S(P)\bigr)
\)
cut out by
\begin{equation}\label{eq:spectral-compact}
\det\bigl(t-\pi^*\Theta\bigr)=0.
\end{equation}
Since the defining equation is monic in the fiber coordinate, \(Y\longrightarrow X\) is finite of degree \(\rk(E)\), and
\[
Y\times_X X_0=Y_0.
\]

Let
\[
q:\PP_S\,:=\,\PP\bigl(\OO_X\oplus S(P)^{-1}\bigr)\longrightarrow X
\]
be the projective completion of \(\Tot(S(P))\), and let \(\Sigma_\infty\subset \PP_S\) be the infinity section.  Write
\[
\overline{Y}\subset \PP_S
\]
for the scheme-theoretic closure of \(Y_0\).  The homogenization of \eqref{eq:spectral-compact} is again monic in the affine fiber coordinate, hence does not vanish along \(\Sigma_\infty\).  Therefore
\[
\overline{Y}\cap \Sigma_\infty=\varnothing,
\]
so \(\overline{Y}\) is contained in the affine chart \(\Tot(S(P))\) and coincides there with \(Y\).  We write
\[
\varphi\,:=\,q|_{\overline{Y}}=\pi|_Y:\overline{Y}\longrightarrow X.
\]

Assume from now on that \(\overline{Y}\) is integral.  The ordinary compactified Beauville--Narasimhan--Ramanan correspondence for the line-bundle twist \(S(P)\) gives a bijection between
\begin{enumerate}
\item[(a)] pairs \((E,\,\Theta)\) whose compactified spectral curve is \(\overline{Y}\), and
\item[(b)] rank-one torsion-free sheaves \(\cF\) on \(\overline{Y}\),
\end{enumerate}
such that
\(
E\,\cong\, \varphi_*\cF,
\)
and under this identification \(\Theta\) is induced by multiplication by the tautological section \(t\).

\begin{definition}\label{def:compactified-sheaf}
Assume that \(\overline{Y}\) is integral.  A \emph{compactified spectral sheaf} for \((E,\,\Theta)\) is a rank-one torsion-free sheaf \(\cF\) on \(\overline{Y}\) such that \(E\,\cong\, \varphi_*\cF\) and \(\Theta\) is induced by multiplication by \(t\).
\end{definition}

The second field is encoded spectrally by \(\OO_{\overline{Y}}\)-linear endomorphisms of \(\cF\).

\begin{lemma}\label{lem:OYlinearity}
Assume that \(\overline{Y}\) is integral, and let \(\cF\) be the compactified spectral sheaf of \((E,\,\Theta)\).  Then the following are equivalent:
\begin{enumerate}
\item[(i)] the fields \(\Theta\) and \(\Theta'\) satisfy the commutativity relation \eqref{eq:comm};
\item[(ii)] under the identification \(E=\varphi_*\cF\), the field \(\Theta'\) is induced by an \(\OO_{\overline{Y}}\)-linear morphism
\[
\vartheta:\cF\longrightarrow \cF\otimes \varphi^*L(P).
\]
\end{enumerate}
\end{lemma}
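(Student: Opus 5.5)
The plan is to translate both conditions into statements about modules over the algebra \(\mathcal{A}:=\varphi_*\OO_{\overline{Y}}\) and to use the projection formula. Since \(\varphi\) is finite (indeed affine, as \(\overline{Y}\subset\Tot(S(P))\)), pushforward gives an equivalence between quasi-coherent \(\OO_{\overline{Y}}\)-modules and quasi-coherent \(\mathcal{A}\)-modules on \(X\). Moreover, \(\mathcal{A}\) is generated as an \(\OO_X\)-algebra by the image of \(S(P)^{-1}\) under the tautological section, i.e.
\[
\mathcal{A}\;=\;\Sym\bigl(S(P)^{-1}\bigr)\big/\bigl(\det(t-\pi^*\Theta)\bigr),
\]
because \(\overline{Y}=Y\) is cut out in \(\Tot(S(P))\) by \eqref{eq:spectral-compact}. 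Under \(E=\varphi_*\cF\), the \(\mathcal{A}\)-module structure on \(E\) is exactly the one in which the generator \(t\) acts by \(\Theta\). By the projection formula, \(\varphi_*(\cF\otimes\varphi^*L(P))\cong E\otimes L(P)\) as \(\mathcal{A}\)-modules, with \(t\) acting by \(\Theta\otimes\id_{L(P)}\).

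Given this, the key step is the following equivalence: an \(\OO_X\)-linear map \(\Theta':E\to E\otimes L(P)\) equals \(\varphi_*\vartheta\) for some \(\OO_{\overline{Y}}\)-linear \(\vartheta\) if and only if \(\Theta'\) is \(\mathcal{A}\)-linear. Since \(\mathcal{A}\) is generated over \(\OO_X\) by \(t\), this holds if and only if \(\Theta'\) commutes with the action of \(t\). Locally that is the identity
\[
(\Theta\otimes\id_{L(P)})\circ\Theta'=(\Theta'\otimes\id_{S(P)})\circ\Theta,
\]
which is \eqref{eq:comm}. The map \(\vartheta\) is then unique, by faithfulness of \(\varphi_*\) on \(\overline{Y}\) when \(\varphi\) is affine.

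For (ii)\(\Rightarrow\)(i): \(\vartheta\) commutes with multiplication by \(t\) on \(\overline{Y}\) because \(\vartheta\) is \(\OO_{\overline{Y}}\)-linear and \(t\) is a section of \(\varphi^*S(P)\). Pushing forward gives \eqref{eq:comm}, with the tensor factors matched via the symmetry isomorphism \(S(P)\otimes L(P)\cong L(P)\otimes S(P)\).

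For (i)\(\Rightarrow\)(ii): \eqref{eq:comm} holds on all of \(X\). By Lemma~\ref{lem:comm}'s argument it need only be checked on a dense open set, but here it is given globally, as an identity of holomorphic maps \(E\to E\otimes S(P)\otimes L(P)\). Hence \(\Theta'\) is a morphism of \(\mathcal{A}\)-modules \(E\to E\otimes L(P)\), and the equivalence of categories yields \(\vartheta\).

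The main technical point I expect is justifying that \(\mathcal{A}\)-linearity reduces to commuting with the single generator in the twisted setting. Concretely, \(t\) is not a function but a section of \(\varphi^*S(P)\). So I would work locally on \(X\): choose a trivializing frame \(s\) of \(S(P)\) on an open \(U\), write \(\Theta=A\otimes s\), and note that \(\mathcal{A}|_U=\OO_U[\lambda]/(\det(\lambda-A))\). Then \(\mathcal{A}\)-linearity is just commuting with \(A\), which is the local form of \eqref{eq:comm}. These local identifications glue because both sides of \eqref{eq:comm} are frame-independent. Integrality of \(\overline{Y}\) is used only to have the BNR identification of Definition~\ref{def:compactified-sheaf}; the rest is formal.
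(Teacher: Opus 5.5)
Your proposal is correct and follows essentially the same route as the paper. Affineness of \(\varphi\) and the projection formula identify \(\OO_{\overline{Y}}\)-linear maps \(\cF\to\cF\otimes\varphi^*L(P)\) with \(\varphi_*\OO_{\overline{Y}}\)-linear maps \(E\to E\otimes L(P)\), and since \(\varphi_*\OO_{\overline{Y}}\) is generated over \(\OO_X\) by the tautological section \(t\), this linearity is exactly commutation with \(\Theta\), i.e.\ \eqref{eq:comm}; your local-frame computation with \(\OO_U[\lambda]/(\det(\lambda-A))\) just makes explicit the generation step that the paper states in one line.
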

\begin{proof}
Since \(\varphi\) is finite, adjunction and the projection formula give canonical identifications
\[
\Hom_{\OO_{\overline{Y}}}\!\bigl(\cF,\,\cF\otimes \varphi^*L(P)\bigr)
\,\cong\,
\Hom_{\OO_X}\!\bigl(\varphi_*\cF,\,\varphi_*\cF\otimes L(P)\bigr)
\,\cong\,
H^0\bigl(X,\, \End(E)\otimes L(P)\bigr).
\]
Under these identifications, an \(\OO_{\overline{Y}}\)-linear morphism \(\vartheta\) corresponds to an \(\OO_X\)-linear morphism
\[
\Theta':E\longrightarrow E\otimes L(P)
\]
which commutes with the \(\OO_{\overline{Y}}\)-action on \(\varphi_*\cF\).  Since \(\overline{Y}\subset \Tot(S(P))\) is generated over \(X\) by the tautological section \(t\), this is equivalent to commuting with multiplication by \(t\), i.e. with \(\Theta\).  This is precisely \eqref{eq:comm}.
\end{proof}

\subsection{The marked spectral scheme and the intrinsic condition}

The punctured problem is controlled not only by \(\overline{Y}\), but also by its finite subscheme above the marked divisor.

\begin{definition}\label{def:marked-spectral-scheme}
Let
\[
Z\,:=\,\overline{Y}\times_X D_{\mathrm{tot}},
\qquad
Z_D\,:=\,\overline{Y}\times_X D,
\qquad
Z_P\,:=\,\overline{Y}\times_X P.
\]
Then \(Z\) is a finite scheme over \(D_{\mathrm{tot}}\), and since \(D\cap P=\varnothing\),
\(
Z=Z_D\sqcup Z_P.
\)
We call \(Z\) the \emph{marked spectral scheme} attached to \(\Theta\).
\end{definition}

Let
\[
\psi:Z\longrightarrow D_{\mathrm{tot}}
\]
be the structure morphism.  Pulling back \(\xi\) along \(\psi\) gives a surjective morphism
\begin{equation}\label{eq:xiZ}
\xi_Z:\varphi^*W(P)|_Z=\psi^*\bigl(W(P)|_{D_{\mathrm{tot}}}\bigr)
\longrightarrow
\psi^*T_{\mathrm{tot}}=\varphi^*T_{\mathrm{tot}}|_Z .
\end{equation}

For \(M\in\{S,L,W\}\), let
\[
\beta_Z^M:\varphi^*M(P)|_Z\longrightarrow \varphi^*M|_Z
\]
be the morphism obtained componentwise from the maps \(\beta_a^M\) of Section~\ref{sec:reconstruction}: it is the identity over \(Z_D\) and the principal-part map over \(Z_P\).  By construction, \(\xi_Z\) factors as
\begin{equation}\label{eq:xiZ-factor}
\varphi^*W(P)|_Z
\xrightarrow{\ \beta_Z^W\ }
\varphi^*W|_Z
\xrightarrow{\ \overline{\xi}_Z\ }
\varphi^*T_{\mathrm{tot}}|_Z,
\end{equation}
where \(\overline{\xi}_Z\) is induced by the fiber maps
\[
\xi_a:S_a\oplus L_a\longrightarrow (T_{\mathrm{tot}})_a,
\qquad a\in D_{\mathrm{tot}}.
\]

Now assume that \(\overline{Y}\) is integral, let \(\cF\) be the compactified spectral sheaf of \((E,\,\Theta)\), and suppose that \(\Theta'\) satisfies \eqref{eq:comm}.  Let
\[
\vartheta:\cF\longrightarrow \cF\otimes \varphi^*L(P)
\]
be the corresponding \(\OO_{\overline{Y}}\)-linear morphism from Lemma~\ref{lem:OYlinearity}.  Together with multiplication by the tautological section,
\[
m_t:\cF\longrightarrow \cF\otimes \varphi^*S(P),
\]
it defines an \(\OO_Z\)-linear morphism
\begin{equation}\label{eq:GammaZ}
\Gamma_Z(\vartheta)\,:=\,(m_t,\,\vartheta)|_Z:
\cF|_Z\longrightarrow \cF|_Z\otimes \varphi^*W(P)|_Z.
\end{equation}

\begin{definition}\label{def:intrinsic-marked-admissibility}
We say that \(\vartheta\) satisfies the \emph{marked spectral condition} if
\begin{equation}\label{eq:intrinsic-marked-condition}
(\id_{\cF|_Z}\otimes \xi_Z)\circ \Gamma_Z(\vartheta)=0
\end{equation}
as a morphism
\[
\cF|_Z\longrightarrow \cF|_Z\otimes \varphi^*T_{\mathrm{tot}}|_Z.
\]
\end{definition}

This is the intrinsic spectral form of the local logarithmic Hecke constraints.

\begin{proposition}\label{prop:intrinsic-marked-condition}
Assume that \(\overline{Y}\) is integral.  Let \(\cF\) be the compactified spectral sheaf of \((E,\,\Theta)\), and let
\[
\vartheta:\cF\longrightarrow \cF\otimes \varphi^*L(P)
\]
be the \(\OO_{\overline{Y}}\)-linear morphism corresponding to \(\Theta'\) via Lemma~\ref{lem:OYlinearity}.  Then the following are equivalent:
\begin{enumerate}
\item[(i)] the pair \((\Theta,\,\Theta')\) satisfies the local logarithmic Hecke constraints \eqref{eq:hecke-constraint} for every \(a\in D_{\mathrm{tot}}\);
\item[(ii)] \(\vartheta\) satisfies the marked spectral condition \eqref{eq:intrinsic-marked-condition}.
\end{enumerate}
\end{proposition}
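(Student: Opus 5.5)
The plan is to push the marked spectral condition forward along the finite map \(\psi:Z\to D_{\mathrm{tot}}\) and compare it with the sum of the local constraints \(\mathsf{C}_a\). Since \(\varphi\) is finite and hence affine, the functor \(\varphi_*\) is exact on quasi-coherent sheaves and faithful. Moreover, \(Z=\overline{Y}\times_X D_{\mathrm{tot}}\), so base change gives a canonical identification
\[
\psi_*(\cF|_Z)\,\cong\,(\varphi_*\cF)|_{D_{\mathrm{tot}}}\,=\,\bigoplus_{a\in D_{\mathrm{tot}}}E_a .
\]
Here we use that \(D_{\mathrm{tot}}\) is reduced, so \(\OO_{D_{\mathrm{tot}}}=\bigoplus_a\kappa(a)\) and restriction to \(D_{\mathrm{tot}}\) is the same as taking fibers. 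This identification holds with no local-freeness assumption on \(\cF\).

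Next we push forward twisted morphisms. For any vector bundle \(M\) on \(X\), the projection formula gives
\[
\psi_*\bigl(\cF|_Z\otimes\psi^*(M|_{D_{\mathrm{tot}}})\bigr)\,\cong\,\bigoplus_a E_a\otimes M_a .
\]
Hence an \(\OO_Z\)-linear map \(\cF|_Z\to\cF|_Z\otimes\psi^*\mathcal{M}\) determines, via \(\psi_*\), a family of linear maps \(E_a\to E_a\otimes\mathcal{M}_a\) for any coherent \(\mathcal M\) on \(D_{\mathrm{tot}}\). The assignment is injective, because \(\psi\) is affine and \(\psi_*\) is faithful on quasi-coherent sheaves. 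Applying this to \(\Gamma_Z(\vartheta)\), the pushforward of \(m_t|_Z\) is the fiber value \(\Theta_a\); this holds because \(\Theta\) is by definition \(\varphi_*(m_t)\) (Definition~\ref{def:compactified-sheaf}) and restriction commutes with \(\varphi_*\) by the base change above. Likewise, the pushforward of \(\vartheta|_Z\) is \(\Theta'_a\), by Lemma~\ref{lem:OYlinearity}. Consequently
\[
\psi_*\Gamma_Z(\vartheta)\,=\,\bigoplus_a(\Theta_a,\Theta'_a):E_a\to E_a\otimes W(P)_a .
\]

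Now push forward the composite with \(\id\otimes\xi_Z\). Since \(\xi_Z=\psi^*(\xi|_{D_{\mathrm{tot}}})\), naturality of the projection formula shows that \(\psi_*(\id_{\cF|_Z}\otimes\xi_Z)=\bigoplus_a\id_{E_a}\otimes\xi_a\), where \(\xi_a\) here denotes the fiber of \(\xi\) on \(W(P)_a\). Using the factorization \eqref{eq:xiZ-factor}, this fiber map equals \(\overline{\xi}_a\circ\beta^W_a\), and \(\beta^W_a=\beta^S_a\oplus\beta^L_a\). The pushforward of the left side of \eqref{eq:intrinsic-marked-condition} is therefore
\[
\bigoplus_a\bigl(\mathsf{C}_{1,a}(\Theta)+\mathsf{C}_{2,a}(\Theta')\bigr)\,=\,\bigoplus_a\mathsf{C}_a(\Theta,\Theta').
\]
This matches the computation of the \(a\)-component of \(\Psi(s)\) in the proof of Theorem~\ref{thm:A}. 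By faithfulness of \(\psi_*\), the morphism \eqref{eq:intrinsic-marked-condition} vanishes if and only if every \(\mathsf{C}_a\) vanishes, which proves (i)\(\Leftrightarrow\)(ii).

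The main obstacle is the step identifying \(\psi_*(m_t|_Z)\) and \(\psi_*(\vartheta|_Z)\) with the fiber values \(\Theta_a\) and \(\Theta'_a\) when \(\cF\) is not locally free along \(Z\). I would handle it by writing \((-)|_Z=(-)\otimes_{\OO_{\overline Y}}\OO_Z\) and using \(\varphi_*(\mathcal G\otimes\varphi^*\mathcal H)=\varphi_*\mathcal G\otimes\mathcal H\) with \(\mathcal H=\OO_{D_{\mathrm{tot}}}\otimes M\). Because this identification is functorial in \(\mathcal G\), it carries \(m_t\otimes\id\) to \(\Theta\otimes\id\) and \(\vartheta\otimes\id\) to \(\Theta'\otimes\id\); no freeness of \(\cF\) is needed.
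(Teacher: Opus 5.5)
Your proposal is correct and follows essentially the same route as the paper. The paper works fiber by fiber over each \(a\in D_{\mathrm{tot}}\): it identifies \(E_a\cong H^0(Z_a,\cF|_{Z_a})\) by base change and uses that a morphism of quasi-coherent sheaves on the affine scheme \(Z_a\) vanishes iff it vanishes on global sections, which is exactly your faithfulness of \(\psi_*\) applied componentwise; your version packages this globally via \(\psi_*\) and spells out more explicitly the projection-formula compatibility and why local freeness of \(\cF\) is not needed.
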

\begin{proof}
For each \(a\in D_{\mathrm{tot}}\), set
\[
Z_a\,:=\,\overline{Y}\times_X \Spec \kappa(a)=\varphi^{-1}(a)\subset Z.
\]
Since \(\varphi\) is finite, base change for coherent sheaves gives canonical identifications
\[
E_a=(\varphi_*\cF)\otimes \kappa(a)\,\cong\, H^0\bigl(Z_a,\,\cF|_{Z_a}\bigr),
\]
\[
E_a\otimes W(P)_a
\,\cong\,
H^0\bigl(Z_a,\,\cF|_{Z_a}\otimes \varphi^*W(P)|_{Z_a}\bigr),
\]
\[
E_a\otimes (T_{\mathrm{tot}})_a
\,\cong\,
H^0\bigl(Z_a,\,\cF|_{Z_a}\otimes \varphi^*T_{\mathrm{tot}}|_{Z_a}\bigr).
\]
Under these identifications, multiplication by \(t\) induces \(\Theta_a\), the morphism \(\vartheta\) induces \(\Theta'_a\), and
\[
\bigl((\id\otimes \xi_Z)\circ \Gamma_Z(\vartheta)\bigr)|_{Z_a}
\]
induces the map
\[
\mathsf{C}_a(\Theta,\,\Theta'):
E_a\longrightarrow E_a\otimes (T_{\mathrm{tot}})_a
\]
from Definition~\ref{def:hecke-constraint}.

Since \(Z_a\) is finite, it is affine.  A morphism of quasi-coherent sheaves on an affine scheme is zero if and only if the induced map on global sections is zero.  Therefore
\[
\mathsf{C}_a(\Theta,\,\Theta')=0
\iff
\bigl((\id\otimes \xi_Z)\circ \Gamma_Z(\vartheta)\bigr)|_{Z_a}=0.
\]
Finally, \(Z=\coprod_{a\in D_{\mathrm{tot}}} Z_a\), so vanishing on all \(Z_a\) is equivalent to \eqref{eq:intrinsic-marked-condition}.
\end{proof}

\subsection{Residual tautological data and the graph form}

Assume now that the slope map
\(
\rho^a:S_a\longrightarrow L_a
\)
is defined for every \(a\in D_{\mathrm{tot}}\); equivalently, \(\ker(\xi_a)\subset S_a\oplus L_a\) is the graph of \(\rho^a\) for every marked point.  Pulling back the collection \(\{\rho^a\}_{a\in D_{\mathrm{tot}}}\) along \(\psi:Z\longrightarrow D_{\mathrm{tot}}\) gives a morphism
\[
\rho_Z:\varphi^*S|_Z\longrightarrow \varphi^*L|_Z.
\]

\begin{definition}\label{def:lambdaZ}
The \emph{residual tautological section} is defined as 
\[
\lambda_Z\,:=\,\beta_Z^S\bigl(t|_Z\bigr)\in H^0\bigl(Z,\,\varphi^*S|_Z\bigr).
\]
We set
\(
b_Z\,:=\,\rho_Z(\lambda_Z)\in H^0\bigl(Z,\,\varphi^*L|_Z\bigr).
\)
\end{definition}

If
\(
\vartheta:\cF\longrightarrow \cF\otimes \varphi^*L(P)
\)
is \(\OO_{\overline{Y}}\)-linear, define
\begin{equation}\label{eq:varthetaZ}
\vartheta_Z\,:=\,(\id_{\cF|_Z}\otimes \beta_Z^L)\circ \vartheta|_Z:
\cF|_Z\longrightarrow \cF|_Z\otimes \varphi^*L|_Z.
\end{equation}
Since \(b_Z\) is a section of \(\varphi^*L|_Z\), multiplication by \(b_Z\) defines an \(\OO_Z\)-linear morphism
\[
m_{b_Z}:\cF|_Z\longrightarrow \cF|_Z\otimes \varphi^*L|_Z.
\]

\begin{proposition}\label{prop:marked-graph}
Assume that \(\overline{Y}\) is integral and that the slope maps \(\rho^a\) are defined for all \(a\in D_{\mathrm{tot}}\).  Then the marked spectral condition \eqref{eq:intrinsic-marked-condition} is equivalent to the equality
\begin{equation}\label{eq:marked-graph-condition}
\vartheta_Z=m_{b_Z}
\end{equation}
as morphisms
\(
\cF|_Z\longrightarrow \cF|_Z\otimes \varphi^*L|_Z.
\)
\end{proposition}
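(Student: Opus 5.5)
The plan is to rewrite the intrinsic condition \eqref{eq:intrinsic-marked-condition} using the factorization \eqref{eq:xiZ-factor} of \(\xi_Z\), split it along the decomposition \(W=S\oplus L\), and then use the slope maps to solve for the \(L\)-component. First, by \eqref{eq:xiZ-factor}, \((\id\otimes\xi_Z)\circ\Gamma_Z(\vartheta)=(\id\otimes\overline{\xi}_Z)\circ(\id\otimes\beta_Z^W)\circ(m_t,\vartheta)|_Z\). Since \(\beta_Z^W=\beta_Z^S\oplus\beta_Z^L\) is diagonal, the middle composite is the pair \(\bigl((\id\otimes\beta_Z^S)\circ m_t|_Z,\ \vartheta_Z\bigr)\). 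Writing \(\overline{\xi}_Z=\xi_{1,Z}+\xi_{2,Z}\), where \(\xi_{i,Z}\) are the pullbacks along \(\psi\) of the component maps \eqref{eq:xi-fiber}, the marked spectral condition becomes
\[
(\id\otimes\xi_{1,Z})\circ(\id\otimes\beta_Z^S)\circ m_t|_Z+(\id\otimes\xi_{2,Z})\circ\vartheta_Z=0.
\]

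Second, I identify \((\id\otimes\beta_Z^S)\circ m_t|_Z\) with multiplication by \(\lambda_Z=\beta_Z^S(t|_Z)\). The map \(m_t\) is \(f\mapsto f\otimes t\), and \(\beta_Z^S\) is \(\OO_Z\)-linear, so \((\id\otimes\beta_Z^S)(f\otimes t|_Z)=f\otimes\beta_Z^S(t|_Z)\). The only point to verify is that \(\beta_Z^S\) is genuinely \(\OO_Z\)-linear as a map of pulled-back sheaves, including over \(Z_P\) where it is a principal-part map. This holds because it is the pullback along \(\psi\) of an \(\OO_{D_{\mathrm{tot}}}\)-linear map of fibers. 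So the first term equals \(m_{\xi_{1,Z}(\lambda_Z)}\).

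Third, since the slope maps are defined, \(\xi_{2,Z}\) is an isomorphism \(\varphi^*L|_Z\to\varphi^*T_{\mathrm{tot}}|_Z\), namely the pullback of fiberwise isomorphisms of one-dimensional spaces. Also \(\rho_Z=-\xi_{2,Z}^{-1}\circ\xi_{1,Z}\), by pulling back \eqref{eq:rho-a}. Composing the displayed condition with \(\id\otimes\xi_{2,Z}^{-1}\), which is injective, shows it is equivalent to \(\vartheta_Z=m_{\rho_Z(\lambda_Z)}=m_{b_Z}\). This is \eqref{eq:marked-graph-condition}.

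I expect the main obstacle to be the bookkeeping of the tensor identifications over the nonreduced parts of \(Z\), in particular checking that \(\overline{\xi}_Z\), \(\rho_Z\) and \(\beta_Z\) are compatible pullbacks so that the scalar identities hold as \(\OO_Z\)-module maps and not merely on fibers. I would handle this by working over each \(Z_a\) separately, as in Proposition~\ref{prop:intrinsic-marked-condition}. There all the line-bundle pullbacks are free \(\OO_{Z_a}\)-modules pulled back from one-dimensional vector spaces, and the maps are constants tensored with the identity, so the identities reduce to the pointwise relation \(\xi_{2,a}\circ\rho^a=-\xi_{1,a}\).
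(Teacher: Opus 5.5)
Your proposal is correct and follows essentially the same route as the paper: both use the factorization \eqref{eq:xiZ-factor} to rewrite the marked condition as $(\id\otimes\overline{\xi}_Z)\circ(m_{\lambda_Z},\vartheta_Z)=0$, and then reduce over each $Z_a$ to linear algebra with the slope map. The only difference is presentational: the paper phrases the last step as ``the kernel of $\overline{\xi}_Z|_{Z_a}$ is $\cF|_{Z_a}\otimes\ker(\xi_a)$, the graph of $\rho^a$,'' while you invert $\xi_{2,Z}$ directly using $\rho_Z=-\xi_{2,Z}^{-1}\circ\xi_{1,Z}$, which makes the same fact explicit.
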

\begin{proof}
Using the factorization \eqref{eq:xiZ-factor}, condition \eqref{eq:intrinsic-marked-condition} is equivalent to
\[
(\id_{\cF|_Z}\otimes \overline{\xi}_Z)\circ
(\id_{\cF|_Z}\otimes \beta_Z^W)\circ \Gamma_Z(\vartheta)=0.
\]
By definition of \(\Gamma_Z(\vartheta)\), of \(\beta_Z^S\), and of \(\vartheta_Z\), this may be rewritten as
\[
(\id_{\cF|_Z}\otimes \overline{\xi}_Z)\circ
(m_{\lambda_Z},\,\vartheta_Z)=0.
\]
Restricting to \(Z_a\), the kernel of \(\overline{\xi}_Z|_{Z_a}\) is
\[
\cF|_{Z_a}\otimes \ker(\xi_a)
=
\cF|_{Z_a}\otimes \ell_a.
\]
Since \(\ell_a\subset S_a\oplus L_a\) is the graph of \(\rho^a\), the preceding vanishing condition on \(Z_a\) is equivalent to
\[
\vartheta_Z|_{Z_a}
=
m_{\rho^a(\lambda_Z|_{Z_a})}.
\]
By definition of \(b_Z\), this is exactly
\[
\vartheta_Z|_{Z_a}=m_{b_Z}|_{Z_a}.
\]
Since \(Z=\coprod_{a\in D_{\mathrm{tot}}} Z_a\), the claim follows.
\end{proof}

\subsection{The logarithmic spectral correspondence}

We can now state the spectral classification in its intrinsic form.

\begin{theorem}\label{thm:B-marked}
Assume that \(\overline{Y}\) is integral.  Then there is a natural bijection between the isomorphism classes of:
\begin{enumerate}
\item[(I)] logarithmic \(V\)-twisted Higgs bundles
\[
(E,\theta),
\qquad
\theta\in H^0\bigl(X,\, \End(E)\otimes V\bigr),
\qquad
\theta\wedge\theta=0,
\]
such that the associated logarithmic \(S\)-twisted field \(\Theta\) has compactified spectral curve \(\overline{Y}\);

\item[(II)] pairs \((\cF,\,\vartheta)\), where \(\cF\) is a rank-one torsion-free sheaf on \(\overline{Y}\) and
\[
\vartheta:\cF\longrightarrow \cF\otimes \varphi^*L(P)
\]
is an \(\OO_{\overline{Y}}\)-linear morphism satisfying the marked spectral condition \eqref{eq:intrinsic-marked-condition}.
\end{enumerate}
Under the correspondence,
\(
E=\varphi_*\cF,
\)
the field \(\Theta\) is induced by multiplication by \(t\), the field \(\Theta'\) is induced by \(\vartheta\), and \(\theta\) is the unique logarithmic \(V\)-twisted Higgs field lifting \((\Theta,\,\Theta')\).
\end{theorem}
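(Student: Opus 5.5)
The plan is to assemble the bijection by chaining the results already proved: the compactified BNR correspondence for the line twist \(S(P)\), Lemma~\ref{lem:OYlinearity}, Proposition~\ref{prop:intrinsic-marked-condition}, and Theorem~\ref{thm:A}. I will construct maps in both directions on objects, check that they are mutually inverse, and then check that they respect isomorphisms.

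\textbf{From (I) to (II).} Start with \((E,\theta)\) in (I) and form \((\Theta,\Theta')\) via \eqref{eq:ThetaTheta} and \eqref{eq:ThetaPrime}. By hypothesis \(\Theta\) has compactified spectral curve \(\overline{Y}\), which is integral. The compactified BNR correspondence therefore gives a rank-one torsion-free sheaf \(\cF\) with \(E\cong\varphi_*\cF\) and \(\Theta\) equal to multiplication by \(t\). By Lemma~\ref{lem:comm}, \(\theta\wedge\theta=0\) implies \eqref{eq:comm}, so Lemma~\ref{lem:OYlinearity} produces \(\vartheta:\cF\to\cF\otimes\varphi^*L(P)\) inducing \(\Theta'\). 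Since \((\Theta,\Theta')\) comes from \(\theta\), Theorem~\ref{thm:A}(a) shows that the local constraints \eqref{eq:hecke-constraint} hold at every \(a\in D_{\mathrm{tot}}\). Proposition~\ref{prop:intrinsic-marked-condition} then says \(\vartheta\) satisfies \eqref{eq:intrinsic-marked-condition}.

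\textbf{From (II) to (I).} Start with \((\cF,\vartheta)\) in (II). Set \(E:=\varphi_*\cF\), which is locally free because \(\cF\) is torsion-free on a curve finite over the smooth curve \(X\). Let \(\Theta\) be the push-forward of \(m_t\); by BNR its spectral curve is \(\overline{Y}\). Let \(\Theta'\) be the push-forward of \(\vartheta\) under the adjunction isomorphism from the proof of Lemma~\ref{lem:OYlinearity}; that lemma gives \eqref{eq:comm}. Proposition~\ref{prop:intrinsic-marked-condition} turns the marked spectral condition into the constraints at every \(a\in D_{\mathrm{tot}}\). Theorem~\ref{thm:A}(a) then gives a unique lift \(\theta\), and Theorem~\ref{thm:A}(b) together with \eqref{eq:comm} gives \(\theta\wedge\theta=0\).

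\textbf{Inverse constructions.} Going \((E,\theta)\to(\cF,\vartheta)\to(E,\theta)\) returns \((\Theta,\Theta')\). By the uniqueness in Theorem~\ref{thm:A}(a) it returns \(\theta\) itself, transported along \(E\cong\varphi_*\cF\). The other composite is the identity because BNR is a bijection and the identification in Lemma~\ref{lem:OYlinearity} is an isomorphism.

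\textbf{Isomorphisms.} I expect this to be the main, though mild, obstacle. An isomorphism \(g:E_1\to E_2\) with \((g\otimes\id_V)\theta_1=\theta_2 g\) intertwines the induced \(\Theta\)'s and \(\Theta'\)'s, because \(\widehat\rho_1\) and \(\widehat\rho_2\) act only on the \(V\)-factor. Since \(g\) intertwines the \(\Theta\)'s, it commutes with the \(\OO_{\overline{Y}}=\OO_X[t]/(\det)\)-module structures. Hence \(g=\varphi_*h\) for a unique \(\OO_{\overline{Y}}\)-isomorphism \(h:\cF_1\to\cF_2\), and \(h\) intertwines the \(\vartheta\)'s by the naturality of the adjunction. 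Conversely, an isomorphism \(h\) of pairs pushes forward to \(g=\varphi_*h\), which intertwines \(\Theta\) and \(\Theta'\). By the injectivity part of Theorem~\ref{thm:A}(a), applied to \((g\otimes\id_V)\theta_1 g^{-1}\) and \(\theta_2\), it also intertwines the \(\theta\)'s. This shows the correspondence is natural and descends to a bijection on isomorphism classes.
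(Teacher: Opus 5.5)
Your proposal is correct and follows the paper's proof essentially step for step. It uses the same chain: compactified BNR, Lemma~\ref{lem:OYlinearity}, Proposition~\ref{prop:intrinsic-marked-condition}, and Theorem~\ref{thm:A}. There are two minor differences: you get \eqref{eq:comm} from Lemma~\ref{lem:comm} instead of Theorem~\ref{thm:A}(b), which is equivalent here, and you explicitly check compatibility with isomorphisms via $g=\varphi_*h$ and uniqueness of lifts, which the paper leaves implicit in its sentence ``the two constructions are inverse to one another.''
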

\begin{proof}
Let \((E,\theta)\) be an object of \textup{(I)}, and let \((\Theta,\,\Theta')\) be the induced logarithmic line--twisted fields.  Since \(\overline{Y}\) is the compactified spectral curve of \(\Theta\), the compactified BNR correspondence gives a rank-one torsion-free sheaf \(\cF\) on \(\overline{Y}\) such that
\[
E\,\cong\, \varphi_*\cF,
\]
with \(\Theta\) induced by multiplication by \(t\).  Because \(\theta\wedge\theta=0\), Theorem~\ref{thm:A}\textup{(b)} implies \eqref{eq:comm}; hence Lemma~\ref{lem:OYlinearity} gives an \(\OO_{\overline{Y}}\)-linear morphism
\[
\vartheta:\cF\longrightarrow \cF\otimes \varphi^*L(P)
\]
inducing \(\Theta'\).  Finally, Theorem~\ref{thm:A}\textup{(a)} and Proposition~\ref{prop:intrinsic-marked-condition} show that \(\vartheta\) satisfies \eqref{eq:intrinsic-marked-condition}.  Thus \((\cF,\,\vartheta)\) is of type \textup{(II)}.

Conversely, let \((\cF,\,\vartheta)\) be an object of \textup{(II)}, and set
\[
E\,:=\,\varphi_*\cF.
\]
Multiplication by \(t\) defines a logarithmic \(S\)-twisted field
\[
\Theta:E\longrightarrow E\otimes S(P),
\]
and \(\vartheta\) defines a logarithmic \(L\)-twisted field
\[
\Theta':E\longrightarrow E\otimes L(P).
\]
By construction, \(\Theta'\) commutes with \(\Theta\), so \eqref{eq:comm} holds.  By Proposition~\ref{prop:intrinsic-marked-condition}, the pair \((\Theta,\,\Theta')\) satisfies the local logarithmic Hecke constraints.  Hence Theorem~\ref{thm:A} produces a unique section
\[
\theta\in H^0\bigl(X,\, \End(E)\otimes V\bigr)
\]
lifting \((\Theta,\,\Theta')\), and this lift is integrable.  This gives an object of \textup{(I)}.  The two constructions are inverse to one another.
\end{proof}

\subsection{Reduced fibers and the scalar form of the condition}

The intrinsic condition on \(Z\) is the natural one.  For the line-bundle locus studied in Section~\ref{sec:enhanced-hitchin}, we also need its pointwise form under the additional hypotheses used there.

For \(a\in D_{\mathrm{tot}}\) and a closed point \(y\in \varphi^{-1}(a)\), define
\[
\lambda_a(y)\,:=\,\beta_a^S\bigl(t(y)\bigr)\in S_a.
\]
If \(a=x\in D\), this is simply the value of the tautological section at \(y\).  If \(a=p\in P\), it is the residual value of the tautological coordinate at \(y\).

\begin{proposition}\label{prop:scalar}
Assume \(\overline{Y}\) is integral, \(Z=\varphi^{-1}(D_{\mathrm{tot}})\) is reduced, the slope maps \(\rho^a\) are defined for all \(a\in D_{\mathrm{tot}}\), and \(\cF\) is locally free at every point of \(Z\).  Let
\(
\vartheta:\cF\longrightarrow \cF\otimes \varphi^*L(P)
\)
be an \(\OO_{\overline{Y}}\)-linear morphism.  Then the following are equivalent:
\begin{enumerate}
\item[(i)] \(\vartheta\) satisfies the marked spectral condition \eqref{eq:intrinsic-marked-condition};
\item[(ii)] for every \(a\in D_{\mathrm{tot}}\) and every closed point \(y\in \varphi^{-1}(a)\), if
\(
\mu_y\in L(P)_a
\)
denotes the unique element corresponding to the induced map
\[
\vartheta(y):\cF(y)\longrightarrow \cF(y)\otimes L(P)_a
\]
under the canonical identification
\(
\Hom_\CC\bigl(\cF(y),\,\cF(y)\otimes L(P)_a\bigr)\,\cong\, L(P)_a,
\)
then
\begin{equation}\label{eq:scalar-condition}
\beta_a^L(\mu_y)=\rho^a\bigl(\lambda_a(y)\bigr)\in L_a.
\end{equation}
\end{enumerate}
\end{proposition}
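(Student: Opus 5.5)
The plan is to reduce to Proposition~\ref{prop:marked-graph} and then evaluate the resulting identity of morphisms on $Z$ pointwise. Since $\overline{Y}$ is integral and all slope maps $\rho^a$ are defined, Proposition~\ref{prop:marked-graph} shows that (i) is equivalent to
\[
\vartheta_Z=m_{b_Z}\colon \cF|_Z\longrightarrow \cF|_Z\otimes\varphi^*L|_Z .
\]
Because $Z$ is reduced and finite, it is a disjoint union of reduced points $\Spec\kappa(y)$, one for each closed point $y\in\varphi^{-1}(a)$ with $a\in D_{\mathrm{tot}}$. Hence a morphism of coherent sheaves on $Z$ vanishes if and only if it vanishes on each fiber. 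The identity above is therefore equivalent to the family of equalities $\vartheta_Z(y)=m_{b_Z}(y)$ in $\Hom_\CC\bigl(\cF(y),\cF(y)\otimes L_a\bigr)$, one for each such $y$.

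Next I would use the local freeness of $\cF$ at points of $Z$. It makes $\cF(y)=\cF|_{\{y\}}$ a one-dimensional $\kappa(y)=\CC$-vector space. Consequently, for any one-dimensional space $M$ the canonical map $M\to\Hom_\CC(\cF(y),\cF(y)\otimes M)$, $m\mapsto(f\mapsto f\otimes m)$, is an isomorphism. This identification is natural in $M$, so it is compatible with composition by the linear map $\beta_a^L\colon L(P)_a\to L_a$.

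With this in hand, the two sides can be computed separately. On the left, $\vartheta_Z(y)=(\id\otimes\beta_a^L)\circ\vartheta(y)$ corresponds to $\beta_a^L(\mu_y)$, because $\vartheta(y)$ corresponds to $\mu_y$. On the right, $m_{b_Z}(y)$ is multiplication by the value $b_Z(y)$, and by Definition~\ref{def:lambdaZ}
\[
b_Z(y)=\rho^a\bigl(\lambda_Z(y)\bigr)=\rho^a\bigl(\beta_a^S(t(y))\bigr)=\rho^a\bigl(\lambda_a(y)\bigr).
\]
Here I use that pulling back along $\psi$ and then restricting to the point $y$ recovers the fiber maps $\rho^a$ and $\beta_a^S$ at $a$. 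Under the identification of the previous paragraph, the fiberwise equality is therefore exactly \eqref{eq:scalar-condition}, and this proves (i)$\iff$(ii).

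The main care point is the claim that $\lambda_Z(y)$ equals $\lambda_a(y)$. It amounts to saying that restricting $\beta_Z^S$ from $Z$ to the reduced point $y$ agrees with applying $\beta_a^S$ to the fiber value $t(y)\in S(P)_a$. This holds because $\beta_Z^S$ is by definition $\psi^*$ of a map of fibers over the point $a$, and restriction to $y$ commutes with $\psi^*$. Reducedness of $Z$ is used only to pass from the equality on $Z$ to equalities at points. Local freeness is used only to make the endomorphism spaces of the fibers canonically scalar, so that $\mu_y$ is well defined.
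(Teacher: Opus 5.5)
Your proposal is correct and follows essentially the same route as the paper. Both arguments use Proposition~\ref{prop:marked-graph} to reduce (i) to $\vartheta_Z=m_{b_Z}$, split the reduced finite scheme $Z$ into its closed points, and use local freeness of $\cF$ to read both sides at $y$ as the scalars $\beta_a^L(\mu_y)$ and $\rho^a\bigl(\lambda_a(y)\bigr)$ in $L_a$. Your extra remarks on the naturality of the scalar identification and on $\lambda_Z(y)=\lambda_a(y)$ only spell out steps the paper leaves implicit.
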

\begin{proof}
Because \(Z\) is reduced and finite over \(X\), it is the disjoint union of its closed points.  Since \(\cF\) is locally free at every point of \(Z\), each \(\cF(y)\) is one-dimensional.  By Proposition~\ref{prop:marked-graph}, condition \textup{(i)} is equivalent to
\[
\vartheta_Z=m_{b_Z}.
\]
Restricting to the component \(y\in Z\), the left-hand side is multiplication by
\[
\beta_a^L(\mu_y)\in L_a,
\qquad a=\varphi(y),
\]
while the right-hand side is multiplication by
\[
b_Z(y)=\rho^a\bigl(\lambda_a(y)\bigr)\in L_a.
\]
Therefore \(\vartheta_Z=m_{b_Z}\) is equivalent to \eqref{eq:scalar-condition} for every closed point \(y\in Z\).
\end{proof}

We now record the pointwise version of the correspondence used in Section~\ref{sec:enhanced-hitchin}.

\begin{theorem}\label{thm:B}
Assume that \(\overline{Y}\) is integral, that \(Z=\varphi^{-1}(D_{\mathrm{tot}})\) is reduced, and that the slope maps \(\rho^a\) are defined for all \(a\in D_{\mathrm{tot}}\).  Then there is a natural bijection between the isomorphism classes of:
\begin{enumerate}
\item[(I)] logarithmic \(V\)-twisted Higgs bundles
\[
(E,\theta),
\qquad
\theta\in H^0\bigl(X,\, \End(E)\otimes V\bigr),
\qquad
\theta\wedge\theta=0,
\]
such that the associated logarithmic \(S\)-twisted field \(\Theta\) has compactified spectral curve \(\overline{Y}\), and the associated compactified spectral sheaf is locally free at every point of \(Z\);

\item[(II)] pairs \((\cF,\,\vartheta)\), where \(\cF\) is a rank-one torsion-free sheaf on \(\overline{Y}\), locally free at every point of \(Z\), and
\[
\vartheta:\cF\longrightarrow \cF\otimes \varphi^*L(P)
\]
is an \(\OO_{\overline{Y}}\)-linear morphism satisfying \eqref{eq:scalar-condition} for every \(a\in D_{\mathrm{tot}}\) and every closed point \(y\in \varphi^{-1}(a)\).
\end{enumerate}
Under the correspondence,
\(
E=\varphi_*\cF,
\)
the field \(\Theta\) is induced by multiplication by \(t\), the field \(\Theta'\) is induced by \(\vartheta\), and \(\theta\) is the unique logarithmic \(V\)-twisted Higgs field lifting \((\Theta,\,\Theta')\).
\end{theorem}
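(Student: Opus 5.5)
The plan is to derive Theorem~\ref{thm:B} from Theorem~\ref{thm:B-marked} by restriction, using Proposition~\ref{prop:scalar} to replace the intrinsic marked condition with the scalar equations \eqref{eq:scalar-condition}. The hypotheses of Theorem~\ref{thm:B} already include those of Theorem~\ref{thm:B-marked} ($\overline{Y}$ integral). So Theorem~\ref{thm:B-marked} gives a natural bijection between the larger classes (I) and (II) there, and I will show that it matches the subclasses singled out in Theorem~\ref{thm:B}.

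First I check that the extra local freeness condition is respected on both sides. Take an object $(E,\theta)$ of type (I) in Theorem~\ref{thm:B-marked}. Its image $(\cF,\vartheta)$ has $\cF$ equal to the compactified spectral sheaf of $(E,\Theta)$. By Definition~\ref{def:compactified-sheaf} and the BNR correspondence, this sheaf is determined up to isomorphism by $(E,\Theta)$. Hence the condition ``the compactified spectral sheaf is locally free at every point of $Z$'' on side (I) is literally the condition ``$\cF$ is locally free at every point of $Z$'' on side (II). Local freeness is invariant under isomorphism of sheaves, so the condition descends to isomorphism classes. The bijection of Theorem~\ref{thm:B-marked} therefore restricts to a bijection between the locally-free-along-$Z$ subclasses.

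Second, on this subclass, all hypotheses of Proposition~\ref{prop:scalar} hold: $\overline{Y}$ is integral, $Z$ is reduced, the slope maps $\rho^a$ are defined for all $a\in D_{\mathrm{tot}}$, and $\cF$ is locally free along $Z$. That proposition says the marked spectral condition \eqref{eq:intrinsic-marked-condition} is equivalent to \eqref{eq:scalar-condition} at every closed point $y\in\varphi^{-1}(a)$, for every $a\in D_{\mathrm{tot}}$. Substituting this equivalence into side (II) turns the description of Theorem~\ref{thm:B-marked} into exactly the class (II) of Theorem~\ref{thm:B}. The identifications $E=\varphi_*\cF$, $\Theta\leftrightarrow m_t$ and $\Theta'\leftrightarrow\vartheta$, and the uniqueness of the lift $\theta$, carry over verbatim from Theorem~\ref{thm:B-marked} together with Theorem~\ref{thm:A}(a).

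The only point requiring care, and the one I expect to be the main obstacle, is the compatibility of the scalar condition with isomorphisms of pairs. An isomorphism $(\cF,\vartheta)\cong(\cF_1,\vartheta_1)$ is an isomorphism $f:\cF\to\cF_1$ intertwining $\vartheta$ and $\vartheta_1$. I will check that it preserves the scalars $\mu_y$. This holds because $\mu_y$ is defined through the canonical identification $\Hom(\cF(y),\cF(y)\otimes L(P)_a)\cong L(P)_a$ of a one-dimensional space. The fibre map $f(y)$ conjugates $\vartheta(y)$ to $\vartheta_1(y)$ and leaves the scalar unchanged. With this checked, the conditions in (II) are well defined on isomorphism classes, and the restricted bijection is the asserted natural one.
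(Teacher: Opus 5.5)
Your proposal is correct and is essentially the paper's argument: the paper proves the theorem by invoking Theorem~\ref{thm:B-marked} and Proposition~\ref{prop:scalar} together. Your additional checks are the details the paper leaves implicit. You verify that local freeness along \(Z\) matches on both sides, and that the scalars \(\mu_y\) are unchanged under isomorphisms of pairs. Together these ensure the bijection restricts to the stated subclasses.
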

\begin{proof}
This is immediate from Theorem~\ref{thm:B-marked} and Proposition~\ref{prop:scalar}.
\end{proof}

Theorem~\ref{thm:B-marked} is the intrinsic spectral statement.  The extra hypotheses in Proposition~\ref{prop:scalar} and Theorem~\ref{thm:B} are needed only to pass from the scheme-theoretic condition on \(Z\) to the pointwise scalar equations used later.

\section{Enhanced logarithmic Hitchin fibers on the line-bundle locus}\label{sec:enhanced-hitchin}

Throughout this section we assume the hypotheses of Theorem~\ref{thm:B}.  Thus \(\overline{Y}\) is an integral compactified spectral curve, the marked spectral scheme
\[
Z\,:=\,\varphi^{-1}(D_{\mathrm{tot}})
\]
is reduced, and the slope maps \(\rho^a\) are defined for all \(a\in D_{\mathrm{tot}}\).  In this situation, Proposition~\ref{prop:marked-graph} identifies the marked condition with the equality
\begin{equation}\label{eq:section5-marked-condition}
\vartheta_Z=m_{b_Z}.
\end{equation}

We now restrict to the line-bundle locus.  The point of this section is that, once \((\overline{Y},Z,b_Z)\) is fixed, the admissible enhancement data are controlled by a single affine scheme independent of the chosen line bundle.

Set
\[
M\,:=\,\varphi^*L(P),
\qquad
M_Z\,:=\,\varphi^*L|_Z,
\qquad
V\,:=\,H^0(\overline{Y},M),
\qquad
V_Z\,:=\,H^0(Z,M_Z).
\]
Restriction to \(Z\), followed by the residual map \(\beta_Z^L\), gives a linear map
\begin{equation}\label{eq:evres-linear-map}
\evres_Z:V\longrightarrow V_Z,
\qquad
s\longmapsto \beta_Z^L(s|_Z).
\end{equation}
Recall from Definition~\ref{def:lambdaZ} that
\[
b_Z\in V_Z
\]
is the distinguished marked section.

\subsection{The fixed affine enhancement scheme}

Let \(\mathcal N\) be a line bundle on \(\overline{Y}\).  An \emph{enhancement} of \(\mathcal N\) is an \(\OO_{\overline{Y}}\)-linear morphism
\[
\vartheta:\mathcal N\longrightarrow \mathcal N\otimes M.
\]
Since \(\mathcal N\) is invertible, there is a canonical identification
\begin{equation}\label{eq:Hom-sections-linebundle-new}
\Hom_{\OO_{\overline{Y}}}\!\bigl(\mathcal N,\,\mathcal N\otimes M\bigr)
\,\cong\,
H^0\!\bigl(\overline{Y},M\bigr)=V.
\end{equation}
Thus every enhancement of \(\mathcal N\) is given by multiplication by a unique section \(s\in V\).

If \(W\) is a finite-dimensional \(\CC\)-vector space, write
\[
\underline{W}\,:=\,\Spec\bigl(\Sym(W^\vee)\bigr)
\]
for the associated affine space.  The map \eqref{eq:evres-linear-map} induces a morphism of affine schemes
\[
\underline{\evres}_Z:\underline{V}\longrightarrow \underline{V_Z}.
\]

\begin{definition}\label{def:fixed-affine-scheme}
The \emph{affine enhancement scheme} is
\begin{equation}\label{eq:A-Delta-scheme}
A_Z
\,:=\,
\Spec\CC\times_{\underline{V_Z}}\underline{V},
\end{equation}
where \(\Spec\CC\longrightarrow \underline{V_Z}\) is the \(\CC\)-point corresponding to \(b_Z\).
\end{definition}

Equivalently, \(A_Z\) is the scheme-theoretic fiber of \(\underline{\evres}_Z\) over \(b_Z\).  In particular, \(A_Z\) depends only on \((\overline{Y},Z,b_Z)\).

\begin{lemma}\label{lem:representability-A-Delta}
For every \(\CC\)-scheme \(T\), there is a natural identification
\[
A_Z(T)
\,\cong\,
\Bigl\{
s\in V\otimes_\CC \Gamma(T,\,\OO_T)\;\Bigm|\;
(\evres_Z\otimes 1)(s)=b_Z\otimes 1
\Bigr\}.
\]
\end{lemma}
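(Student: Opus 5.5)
The plan is to unwind the fiber product in Definition~\ref{def:fixed-affine-scheme} using the functor of points of the affine spaces \(\underline{V}\) and \(\underline{V_Z}\). The first step is to record the standard identification: for a finite-dimensional \(\CC\)-vector space \(W\) and any \(\CC\)-scheme \(T\),
\[
\underline{W}(T)=\Hom_{\CC\text{-sch}}\bigl(T,\Spec\Sym(W^\vee)\bigr)\cong \Hom_{\CC\text{-alg}}\bigl(\Sym(W^\vee),\Gamma(T,\OO_T)\bigr)\cong \Hom_\CC\bigl(W^\vee,\Gamma(T,\OO_T)\bigr)\cong W\otimes_\CC\Gamma(T,\OO_T).
\]
This uses three facts: morphisms into an affine scheme are determined by ring maps into global sections, \(\Sym\) is the free commutative algebra, and \(W\) is finite-dimensional, so that \(\Hom(W^\vee,R)\cong W\otimes R\). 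The identification is natural in \(T\).

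The second step is to check that the morphism \(\underline{\evres}_Z\), which is induced by the algebra map \(\Sym(\evres_Z^\vee)\colon\Sym(V_Z^\vee)\to\Sym(V^\vee)\), acts on \(T\)-points as \(\evres_Z\otimes 1\colon V\otimes\Gamma(T,\OO_T)\to V_Z\otimes\Gamma(T,\OO_T)\). To see this, precompose a linear map \(V^\vee\to R\) with \(\evres_Z^\vee\) and compare under the isomorphism \(\Hom(W^\vee,R)\cong W\otimes R\). This is the only point requiring a short verification, namely that the transpose convention matches; it is routine and reduces to choosing bases.

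In the same way, the \(\CC\)-point \(b_Z\colon\Spec\CC\to\underline{V_Z}\) is the element \(b_Z\in V_Z\). Its composite with the structure map \(T\to\Spec\CC\) (and \(\Spec\CC(T)\) is a single point) therefore corresponds to \(b_Z\otimes 1\).

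Finally, fiber products of schemes are computed on \(T\)-points as fiber products of sets:
\[
A_Z(T)=\Spec\CC(T)\times_{\underline{V_Z}(T)}\underline{V}(T).
\]
Combining this with the two steps above gives precisely the set of \(s\in V\otimes\Gamma(T,\OO_T)\) with \((\evres_Z\otimes1)(s)=b_Z\otimes1\). Naturality in \(T\) follows because every identification used is natural. I expect no real obstacle; the only care needed is the sign and dual convention in the second step.
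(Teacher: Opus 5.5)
Your proposal is correct and takes the same route as the paper: identify \(\Hom(T,\underline{W})\cong W\otimes_\CC\Gamma(T,\OO_T)\) and compute the fiber product \eqref{eq:A-Delta-scheme} on \(T\)-points. You also check two things the paper leaves implicit: that \(\underline{\evres}_Z\) acts on \(T\)-points as \(\evres_Z\otimes 1\), and that the \(b_Z\)-point pulls back to \(b_Z\otimes 1\). Both checks are right, and the basis computation shows no sign or transpose issue arises.
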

\begin{proof}
For any finite-dimensional \(\CC\)-vector space \(W\), morphisms \(T\longrightarrow \underline{W}\) are the same as global sections of \(W\otimes_\CC \OO_T\), i.e.
\[
\Hom(T,\,\underline{W})\,\cong\, W\otimes_\CC \Gamma(T,\,\OO_T).
\]
Applying this to \(\underline{V}\) and \(\underline{V_Z}\), and then using the fiber-product description \eqref{eq:A-Delta-scheme}, gives the result.
\end{proof}

\begin{proposition}\label{prop:fixed-linebundle-affine-new}
Let \(\mathcal N\) be a line bundle on \(\overline{Y}\).  Under the identification \eqref{eq:Hom-sections-linebundle-new}, the admissible enhancements
\[
\vartheta:\mathcal N\longrightarrow \mathcal N\otimes M
\]
are naturally identified with \(A_Z(\CC)\).
\end{proposition}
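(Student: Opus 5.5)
The plan is to chain three identifications: the isomorphism \eqref{eq:Hom-sections-linebundle-new}, the graph form of the marked condition from Proposition~\ref{prop:marked-graph}, and Lemma~\ref{lem:representability-A-Delta} applied with \(T=\Spec\CC\). Here ``admissible'' means that \(\vartheta\) satisfies the marked spectral condition \eqref{eq:intrinsic-marked-condition}. We work under the standing hypotheses of this section: \(\overline{Y}\) is integral, \(Z\) is reduced, and the slope maps are defined.

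\textbf{Step 1.} By Proposition~\ref{prop:marked-graph} (cf.\ \eqref{eq:section5-marked-condition}), applied with \(\cF=\mathcal N\), admissibility of \(\vartheta\) is equivalent to \(\vartheta_Z=m_{b_Z}\). Write \(\vartheta=m_s\) for the unique \(s\in V\) given by \eqref{eq:Hom-sections-linebundle-new}.

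\textbf{Step 2.} Compute \(\vartheta_Z\) in terms of \(s\). Restriction to \(Z\) commutes with tensoring by the line bundle \(\mathcal N\), so \(\vartheta|_Z=m_{s|_Z}:\mathcal N|_Z\to\mathcal N|_Z\otimes M|_Z\). Since \(\beta_Z^L\) acts only on the \(M\)-factor, we get \((\id\otimes\beta_Z^L)\circ m_{s|_Z}=m_{\beta_Z^L(s|_Z)}\). Hence
\[
\vartheta_Z=m_{\evres_Z(s)}.
\]

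\textbf{Step 3.} Show that \(m_{c}=m_{c'}\) implies \(c=c'\) for sections \(c,c'\in V_Z=H^0(Z,M_Z)\). Because \(\mathcal N|_Z\) is invertible on \(Z\), we have \(\Hom_{\OO_Z}(\mathcal N|_Z,\mathcal N|_Z\otimes M_Z)\cong H^0(Z,M_Z)\), and this identification sends \(m_c\) to \(c\). So the condition \(\vartheta_Z=m_{b_Z}\) is equivalent to \(\evres_Z(s)=b_Z\).

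\textbf{Step 4.} By Lemma~\ref{lem:representability-A-Delta} with \(T=\Spec\CC\), the set of \(s\in V\) with \(\evres_Z(s)=b_Z\) is exactly \(A_Z(\CC)\). Combining Steps 1--3 gives the claimed bijection. Naturality follows because each identification used is canonical and compatible with isomorphisms of \(\mathcal N\): such isomorphisms act by scalars, which conjugate \(m_s\) trivially.

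The main obstacle is the bookkeeping in Step 2, namely checking that \(\beta_Z^L\), which is a principal-part map over \(Z_P\), commutes with multiplication by \(s\) after twisting by \(\mathcal N|_Z\). I would verify this locally: choose a trivialization of \(\mathcal N\) near each point of \(Z\). The map \(\beta_Z^L\) is \(\OO_Z\)-linear and functorial in the untwisted factor, so the compatibility reduces to the case \(\mathcal N=\OO\).
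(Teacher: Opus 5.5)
Your proposal is correct and follows essentially the same route as the paper. Write \(\vartheta=m_s\) with \(s\in V\); under the canonical identification \(\sheafHom_{\OO_Z}(\mathcal N|_Z,\mathcal N|_Z\otimes M_Z)\cong M_Z\), the map \(\vartheta_Z\) corresponds to \(\evres_Z(s)\), so the graph form \eqref{eq:section5-marked-condition} of admissibility becomes \(\evres_Z(s)=b_Z\), which is exactly a \(\CC\)-point of \(A_Z\). (Your Step~2 ``obstacle'' is only the formal identity \((\id\otimes\beta_Z^L)(n\otimes s|_Z)=n\otimes\beta_Z^L(s|_Z)\), so no local trivialization argument is needed.)
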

\begin{proof}
Let \(\vartheta\) correspond to \(s_\vartheta\in V\).  Since \(\mathcal N|_Z\) is invertible, there is a canonical identification
\[
\sheafHom_{\OO_Z}\!\bigl(\mathcal N|_Z,\,\mathcal N|_Z\otimes M_Z\bigr)\,\cong\, M_Z.
\]
Under this identification, the morphism
\[
\vartheta_Z=(\id_{\mathcal N|_Z}\otimes \beta_Z^L)\circ \vartheta|_Z
\]
corresponds exactly to \(\evres_Z(s_\vartheta)\in V_Z\).  Therefore
\[
\vartheta_Z=m_{b_Z}
\iff
\evres_Z(s_\vartheta)=b_Z.
\]
By \eqref{eq:section5-marked-condition}, the left-hand side is precisely the admissibility condition.  Hence admissible enhancements are exactly the \(\CC\)-points of \(A_Z\).
\end{proof}

The next lemma identifies the translation space of \(A_Z\).

\begin{lemma}\label{lem:evres-kernel-new}
Let \(i_Z:Z\hookrightarrow \overline{Y}\) be the closed immersion, and set
\[
Z_D\,:=\,\varphi^{-1}(D),
\qquad
Z_P\,:=\,\varphi^{-1}(P).
\]
Then there is a natural short exact sequence of coherent sheaves on \(\overline{Y}\)
\begin{equation}\label{eq:evres-sheaf-sequence-newer}
0\longrightarrow \varphi^*L(-Z_D)
\longrightarrow M
\xrightarrow{\ \eta_Z\ }
{i_Z}_*(M_Z)
\longrightarrow 0,
\end{equation}
whose induced map on global sections is \(\evres_Z\).  Consequently,
\[
\ker(\evres_Z)\,\cong\, H^0\bigl(\overline{Y},\,\varphi^*L(-Z_D)\bigr).
\]
In particular, if \(A_Z\neq\varnothing\), then \(A_Z\) is a torsor under the additive group attached to \(H^0(\overline{Y},\,\varphi^*L(-Z_D))\).
\end{lemma}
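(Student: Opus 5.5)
The plan is to construct \(\eta_Z\) locally on \(\overline{Y}\), check that it is surjective, and then identify its kernel with \(\varphi^*L(-Z_D)\). Throughout I use that \(\varphi:\overline{Y}\to X\) is finite and flat: it is finite by the monic equation \eqref{eq:spectral-compact}, and flat because \(\overline{Y}\) is integral, so \(\varphi_*\OO_{\overline{Y}}\) is torsion-free and hence locally free on the smooth curve \(X\). Consequently the scheme-theoretic preimages \(Z_D=\varphi^{-1}(D)\) and \(Z_P=\varphi^{-1}(P)\) are the effective Cartier divisors \(\varphi^*D\) and \(\varphi^*P\), and \(Z=Z_D\sqcup Z_P=\varphi^*(D+P)\). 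This is the main point to get right. Without it, "vanishing on \(Z\)" would not be cut out by an invertible ideal, and the kernel would fail to be a line bundle.

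\emph{Construction and surjectivity.} Define \(\eta_Z\) as the composite
\[
M\longrightarrow {i_Z}_*(M|_Z)\xrightarrow{\ {i_Z}_*\beta_Z^L\ }{i_Z}_*(M_Z),
\]
where the first arrow is restriction. This arrow is surjective because \(i_Z\) is a closed immersion. The second arrow is an isomorphism, since \(\beta_Z^L\) is fibrewise an isomorphism: over \(Z_D\) it is the canonical identification \(L(P)_x\cong L_x\) pulled back, and over \(Z_P\) it is the pullback of the principal-part isomorphism \(L(P)_p=L(P)/L\,|_p\xrightarrow{\sim}L_p\) given by \(\beta_p^L\). Hence \(\eta_Z\) is surjective. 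Since \(H^0(\overline{Y},{i_Z}_*M_Z)=H^0(Z,M_Z)=V_Z\), the induced map on global sections is \(s\mapsto\beta_Z^L(s|_Z)\), which is \(\evres_Z\) by \eqref{eq:evres-linear-map}.

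\emph{Kernel.} Because the second arrow is an isomorphism, \(\ker\eta_Z\) equals the kernel of restriction, namely
\[
M\otimes\mathcal I_Z=M(-Z)=\varphi^*L(P)\otimes\varphi^*\OO_X(-D-P)=\varphi^*L(-D)=\varphi^*L(-Z_D).
\]
Here I use \(\mathcal I_Z=\OO_{\overline{Y}}(-\varphi^*D-\varphi^*P)\) from the flatness remark above. This gives the exact sequence \eqref{eq:evres-sheaf-sequence-newer}. Left exactness of \(H^0\) then yields \(\ker(\evres_Z)\cong H^0(\overline{Y},\varphi^*L(-Z_D))\).

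\emph{Torsor statement.} By Lemma~\ref{lem:representability-A-Delta}, for every \(\CC\)-scheme \(T\) the set \(A_Z(T)\) is the set of solutions \(s\) of \((\evres_Z\otimes1)(s)=b_Z\otimes1\). Such a solution set is acted on simply transitively, functorially in \(T\), by \(\ker(\evres_Z)\otimes\Gamma(T,\OO_T)\), provided it is nonempty. If \(A_Z\neq\varnothing\), then it has a \(\CC\)-point \(s_0\), because \(A_Z\) is a nonempty closed subscheme of affine space (an affine-linear fibre). Translation by \(s_0\) then gives an isomorphism \(\underline{\ker(\evres_Z)}\xrightarrow{\sim}A_Z\) compatible with the action. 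So \(A_Z\) is a trivial torsor under the additive group of \(H^0(\overline{Y},\varphi^*L(-Z_D))\).
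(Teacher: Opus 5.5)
Your proof is correct, but it computes the kernel by a different decomposition than the paper.

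\textbf{The paper's route.} It uses the standing reducedness of \(Z\) to treat \(Z_P\) and \(Z_D\) separately. For the poles it takes the divisor sequence \(0\to\varphi^*L\to\varphi^*L(Z_P)\to {i_{Z_P}}_*(\varphi^*L|_{Z_P})\to 0\), and for the zeros the restriction sequence \(0\to\varphi^*L(-Z_D)\to\varphi^*L\to{i_{Z_D}}_*(\varphi^*L|_{Z_D})\to 0\). It glues the two quotient maps using \(Z_D\cap Z_P=\varnothing\). The kernel is then read off as the sections with zero principal part along \(Z_P\) that vanish along \(Z_D\). Finally it asserts that the combined map induces \(\evres_Z\) on global sections.

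\textbf{Your route.} You treat \(Z=\varphi^*(D+P)\) as a single effective Cartier divisor, justified by integrality (or flatness) of \(\varphi\). You define \(\eta_Z\) as restriction to \(Z\) followed by \({i_Z}_*\beta_Z^L\). You note that \(\beta_Z^L\) is an isomorphism, being pulled back from the isomorphisms \(\beta_a^L\). The kernel then comes out in one step as \(M\otimes\mathcal{I}_Z=M(-Z)=\varphi^*L(-Z_D)\).

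\textbf{What each buys.} In your version the agreement with \(\evres_Z\) is tautological, since \(\eta_Z\) is literally the sheaf version of \eqref{eq:evres-linear-map}. Your argument also never uses reducedness of \(Z\), only that \(Z\) is Cartier. It makes explicit the identification \(M|_{Z_P}\cong\varphi^*L|_{Z_P}\), which the paper's divisor sequence uses implicitly. The paper's version instead gives the more concrete pole-and-vanishing description of the kernel. For the torsor claim, you show a nonempty affine-linear fibre has a \(\CC\)-point and that translation trivializes it. This fills in what the paper states in one line.
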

\begin{proof}
Because \(Z\) is reduced and finite over the curve \(X\), the schemes \(Z_D\) and \(Z_P\) are reduced effective divisors on the integral curve \(\overline{Y}\), they are disjoint, and
\[
Z=Z_D\sqcup Z_P.
\]
Since \(P\) is reduced, one has
\[
\varphi^*P=Z_P,
\qquad\text{hence}\qquad
M=\varphi^*L(Z_P).
\]

For any reduced effective divisor \(E\) on a curve and any line bundle \(N\), there is an exact sequence
\[
0\longrightarrow N\longrightarrow N(E)\longrightarrow {i_E}_*(N|_E)\longrightarrow 0.
\]
Applying this first with \(N=\varphi^*L\) and \(E=Z_P\), and then with \(N=\varphi^*L\) and \(E=Z_D\), one obtains
\[
0\longrightarrow \varphi^*L
\longrightarrow M
\longrightarrow {i_{Z_P}}_*(\varphi^*L|_{Z_P})
\longrightarrow 0
\]
and
\[
0\longrightarrow \varphi^*L(-Z_D)
\longrightarrow \varphi^*L
\longrightarrow {i_{Z_D}}_*(\varphi^*L|_{Z_D})
\longrightarrow 0.
\]
Since \(Z_D\cap Z_P=\varnothing\), these quotient maps combine to a surjection
\[
\eta_Z:M\longrightarrow {i_Z}_*(M_Z)
\,\cong\,
{i_{Z_D}}_*(\varphi^*L|_{Z_D})\oplus {i_{Z_P}}_*(\varphi^*L|_{Z_P}).
\]
Its kernel consists exactly of those sections of \(M\) with zero principal part along \(Z_P\) and vanishing restriction along \(Z_D\), namely \(\varphi^*L(-Z_D)\).  This proves \eqref{eq:evres-sheaf-sequence-newer}.  On global sections, \(\eta_Z\) is precisely the map \(\evres_Z\).

The final statement is immediate: every nonempty fiber of a linear map of affine spaces is a torsor under its kernel.
\end{proof}

\subsection{Families and functoriality}

Let
\[
p_Y:\overline{Y}\times T\longrightarrow \overline{Y},
\qquad
p_T:\overline{Y}\times T\longrightarrow T
\]
be the projections.

Define \(\mathfrak H^{\log,\,\mathrm{lb}}_{\overline{Y},d}\) to be the category fibered in groupoids over \(\CC\)-schemes whose objects over a scheme \(T\) are pairs \((\mathcal N,\,\vartheta)\) such that:
\begin{enumerate}
\item[(i)] \(\mathcal N\) is a line bundle on \(\overline{Y}\times T\) whose restriction to every geometric fiber of \(p_T\) has degree \(d\);
\item[(ii)] \(\vartheta:\mathcal N\longrightarrow \mathcal N\otimes p_Y^*M\) is an \(\OO_{\overline{Y}\times T}\)-linear morphism whose restriction to every geometric fiber is admissible.
\end{enumerate}
Morphisms are isomorphisms of line bundles intertwining the enhancement morphisms.

Let \(\mathbf{Pic}^d(\overline{Y})\) denote the Picard stack of degree-\(d\) line bundles on \(\overline{Y}\).

\begin{proposition}\label{prop:family-enhancement}
Let \(T\) be a \(\CC\)-scheme, and let \(\mathcal N\) be a line bundle on \(\overline{Y}\times T\).  Then:
\begin{enumerate}
\item[(a)] relative enhancements
\(
\vartheta:\mathcal N\longrightarrow \mathcal N\otimes p_Y^*M
\)
are canonically equivalent to morphisms
\(
s_\vartheta:T\longrightarrow \underline{V};
\)
\item[(b)] under this correspondence, \(\vartheta\) is admissible on every geometric fiber if and only if \(s_\vartheta\) factors through the closed subscheme
\(
A_Z\hookrightarrow \underline{V}.
\)
Both constructions are functorial in \(T\).
\end{enumerate}
\end{proposition}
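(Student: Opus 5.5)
The plan is to reduce everything to Lemma~\ref{lem:representability-A-Delta} by computing relative homomorphism sheaves through base change along the proper morphism \(p_T\). For (a), since \(\mathcal N\) is invertible on \(\overline{Y}\times T\), the identification \eqref{eq:Hom-sections-linebundle-new} globalizes: \(\sheafHom(\mathcal N,\mathcal N\otimes p_Y^*M)\cong p_Y^*M\) canonically. Hence relative enhancements are the same as global sections of \(p_Y^*M\) on \(\overline{Y}\times T\), that is, global sections of \(p_{T*}p_Y^*M\) on \(T\). Next I apply flat base change to the cartesian square formed by \(\overline{Y}\times T\to \overline{Y}\) and \(T\to\Spec\CC\). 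This square is flat because every \(\CC\)-scheme is flat over \(\CC\), and \(\overline{Y}\) is proper. It gives a canonical isomorphism
\[
p_{T*}p_Y^*M\cong V\otimes_\CC \OO_T ,
\]
so that \(\Gamma(\overline{Y}\times T,\,p_Y^*M)\cong V\otimes_\CC\Gamma(T,\OO_T)\). I first do this for affine \(T\), where it is just \(H^0\) commuting with the flat extension \(\CC\to\Gamma(T,\OO_T)\), and then glue over an affine cover using naturality. By the proof of Lemma~\ref{lem:representability-A-Delta}, the right-hand side is \(\Hom(T,\underline{V})\). Functoriality in \(T\) follows from the compatibility of flat base change with composition of base changes.

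For (b), I first construct the relative version of \(\evres_Z\). Base-changing the exact sequence \eqref{eq:evres-sheaf-sequence-newer} along the flat map \(p_Y\) gives a surjection \(p_Y^*M\to (i_Z\times\id_T)_*(p_Y^*M_Z)\). The same flat base change argument, applied now to the finite scheme \(Z\), shows that on global sections this map is \(\evres_Z\otimes 1: V\otimes\Gamma(T,\OO_T)\to V_Z\otimes\Gamma(T,\OO_T)\). As in the proof of Proposition~\ref{prop:fixed-linebundle-affine-new}, the relative morphism \(\vartheta_{Z\times T}=(\id\otimes\beta^L_{Z})\circ\vartheta|_{Z\times T}\) corresponds to \((\evres_Z\otimes1)(s_\vartheta)\). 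Consequently, by Lemma~\ref{lem:representability-A-Delta}, \(s_\vartheta\) factors through \(A_Z\) if and only if the relative marked condition \(\vartheta_{Z\times T}=m_{b_Z\otimes 1}\) holds on \(Z\times T\). Restricting to a geometric point \(t\to T\) commutes with all of these identifications, so Proposition~\ref{prop:fixed-linebundle-affine-new} shows that fiberwise admissibility at \(t\) is exactly \(s_\vartheta(t)\in A_Z\).

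The main obstacle is the passage from ``admissible on every geometric fiber'' to ``factors scheme-theoretically through \(A_Z\)''. The fiberwise condition only says that the entries of \((\evres_Z\otimes1)(s_\vartheta)-b_Z\otimes1\) vanish at every point of \(T\), i.e. that they are nilpotent. That is equivalent to factoring through \(A_Z\) when \(T\) is reduced, but not for general \(T\). I would therefore treat the reduced case by this pointwise argument: a section of a free \(\OO_T\)-module on a reduced scheme vanishes if it vanishes at all points. For general \(T\), I would read the condition ``admissible on every geometric fiber'' in the definition of \(\mathfrak H^{\log,\mathrm{lb}}_{\overline{Y},d}\) as the relative marked condition on \(Z\times T\). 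This is the stack-theoretically correct version, and it agrees with the fiberwise one over reduced bases. With that reading, (b) is exactly the equivalence established in the previous paragraph. I would make this interpretation explicit in the proof, since the statement is false for nonreduced \(T\) under the literal fiberwise reading.
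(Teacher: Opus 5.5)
Your proof of (a), and your identification of the relative marked condition on \(Z\times T\) with the equation \((\evres_Z\otimes 1)(s_\vartheta)=b_Z\otimes 1\), follow the paper's route: \(\sheafHom(\mathcal N,\mathcal N\otimes p_Y^*M)\cong p_Y^*M\), then flat base change to \(V\otimes_\CC\OO_T\), then Lemma~\ref{lem:representability-A-Delta}.

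The difference is in the last step of (b). There your caution is justified and the paper's argument falls short. The paper says that pulling back to a geometric point turns \((\evres_Z\otimes 1)(s_\vartheta)=b_Z\otimes 1\) into fiberwise admissibility, and concludes. That only proves that factoring through \(A_Z\) implies admissibility on every geometric fiber.

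The converse genuinely needs \(T\) reduced. Suppose \(A_Z\neq\varnothing\) and \(\evres_Z\neq 0\). Take \(T=\Spec\CC[\varepsilon]/(\varepsilon^2)\) and \(s=s_0+\varepsilon v\), with \(s_0\in A_Z(\CC)\) and \(\evres_Z(v)\neq 0\). Every geometric point of \(T\) factors through \(\Spec\CC\), so each fiber sees only \(s_0\), which is admissible. Yet \((\evres_Z\otimes 1)(s)-b_Z\otimes 1=\varepsilon\,\evres_Z(v)\neq 0\), so \(s\) does not factor through \(A_Z\). When \(A_Z=\varnothing\) or \(\evres_Z=0\), the literal statement does hold for all \(T\).

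So your repair is the right one: prove the literal statement for reduced \(T\), and otherwise read admissibility in the definition of \(\mathfrak H^{\log,\mathrm{lb}}_{\overline{Y},d}\) as the relative condition on \(Z\times T\). This repair is also what Theorem~\ref{thm:stack-product} and Corollary~\ref{cor:algebraicity-enhanced-stack} need. Under the literal fiberwise definition, the functor \(\mathfrak F\) is not well defined over nonreduced bases. The stack would then be \(\mathbf{Pic}^d(\overline{Y})\) times the formal completion of \(\underline{V}\) along \(A_Z\), which is not algebraic when \(A_Z\) is a nonempty proper closed subscheme, rather than \(\mathbf{Pic}^d(\overline{Y})\times A_Z\).

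One small point to add: Proposition~\ref{prop:fixed-linebundle-affine-new} is stated for \(\CC\)-points. At a geometric point \(\Spec K\to T\) you should invoke its base change to \(K\), which has the same proof.
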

\begin{proof}
Since \(\mathcal N\) is invertible,
\[
\sheafHom\!\bigl(\mathcal N,\,\mathcal N\otimes p_Y^*M\bigr)\,\cong\, p_Y^*M.
\]
Because \(\overline{Y}\) is proper over \(\Spec\CC\), flat base change gives
\[
p_{T*}(p_Y^*M)\,\cong\, H^0(\overline{Y},M)\otimes_\CC \OO_T=V\otimes_\CC \OO_T.
\]
Therefore
\[
\Hom_{\OO_{\overline{Y}\times T}}\!\bigl(\mathcal N,\,\mathcal N\otimes p_Y^*M\bigr)
\,\cong\,
\Gamma(T,V\otimes_\CC \OO_T)
\,\cong\,
\Hom(T,\,\underline{V}),
\]
which proves \textup{(a)}.

Now let \(s_\vartheta:T\longrightarrow \underline{V}\) be the morphism corresponding to \(\vartheta\).  By Lemma~\ref{lem:representability-A-Delta}, the factorization of \(s_\vartheta\) through \(A_Z\) is equivalent to the equality
\[
(\evres_Z\otimes 1)(s_\vartheta)=b_Z\otimes 1
\]
in \(V_Z\otimes_\CC \Gamma(T,\,\OO_T)\).  Pulling back to a geometric point \(t\longrightarrow T\), this becomes precisely the condition that the fiber enhancement \(\vartheta_t\) satisfy
\[
(\vartheta_t)_Z=m_{b_Z}.
\]
By \eqref{eq:section5-marked-condition}, that is the admissibility condition on the fiber over \(t\).  Hence \textup{(b)} follows.
\end{proof}

The key point is that the admissibility condition is cut out by the fixed closed subscheme \(A_Z\subset \underline{V}\), independently of the chosen line bundle.

\begin{theorem}\label{thm:stack-product}
Viewing \(A_Z\) as a category fibered in groupoids through its functor of points, there is a canonical equivalence of categories fibered in groupoids
\begin{equation}\label{eq:stack-product}
\mathfrak H^{\log,\,\mathrm{lb}}_{\overline{Y},d}
\;\simeq\;
\mathbf{Pic}^d(\overline{Y})\times A_Z.
\end{equation}
Under this equivalence, the forgetful morphism
\[
\mathfrak H^{\log,\,\mathrm{lb}}_{\overline{Y},d}\longrightarrow \mathbf{Pic}^d(\overline{Y}),
\qquad
(\mathcal N,\,\vartheta)\longmapsto \mathcal N,
\]
identifies with the projection to the first factor.
\end{theorem}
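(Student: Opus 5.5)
We construct a functor
\[
\Xi:\mathfrak H^{\log,\mathrm{lb}}_{\overline{Y},d}\longrightarrow \mathbf{Pic}^d(\overline{Y})\times A_Z,
\qquad
(\mathcal N,\vartheta)\longmapsto (\mathcal N,\,s_\vartheta),
\]
and show it is fully faithful and essentially surjective fiberwise over each \(T\). Here \(s_\vartheta:T\to\underline{V}\) is the morphism attached to \(\vartheta\) by Proposition~\ref{prop:family-enhancement}(a). By Proposition~\ref{prop:family-enhancement}(b), admissibility on geometric fibers means that \(s_\vartheta\) factors through \(A_Z\), so \(\Xi\) lands in the right target on objects. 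Since \(A_Z\) is a scheme, viewed as a category fibered in sets, a morphism in the product over \(T\) is an isomorphism \(f:\mathcal N_1\xrightarrow{\sim}\mathcal N_2\) together with the equality \(s_1=s_2\). Functoriality of \(\Xi\) in \(T\) follows from the functoriality clause of Proposition~\ref{prop:family-enhancement}, because the identification \(\sheafHom(\mathcal N,\mathcal N\otimes p_Y^*M)\cong p_Y^*M\) and flat base change commute with pullback along \(T'\to T\).

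The key step is the behaviour of \(\Xi\) on morphisms. Let \(f:\mathcal N_1\to\mathcal N_2\) be an isomorphism of line bundles on \(\overline{Y}\times T\). The identification \(\sheafHom(\mathcal N,\mathcal N\otimes p_Y^*M)\cong p_Y^*M\) is given by the trace, equivalently by \(\mathcal N^\vee\otimes\mathcal N\cong\OO\). Under this identification, conjugation by \(f\) acts as the identity, because it is \(\OO\)-linear and the endomorphisms of a line bundle are central. Hence \(f\) intertwines \(\vartheta_1\) and \(\vartheta_2\) if and only if \(s_{\vartheta_1}=s_{\vartheta_2}\).

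Two consequences follow. First, morphisms \((\mathcal N_1,\vartheta_1)\to(\mathcal N_2,\vartheta_2)\) in \(\mathfrak H^{\log,\mathrm{lb}}_{\overline{Y},d}(T)\) are exactly the isomorphisms \(f\) with \(s_1=s_2\), which are exactly the morphisms in the product. So \(\Xi\) is fully faithful.

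Second, for essential surjectivity, take a pair \((\mathcal N,s)\) with \(s\in A_Z(T)\subset\underline{V}(T)\). Proposition~\ref{prop:family-enhancement}(a) gives a unique \(\vartheta_s\) with \(s_{\vartheta_s}=s\), and part (b) shows that \(\vartheta_s\) is admissible on every geometric fiber. The degree condition is built into both sides, so \((\mathcal N,\vartheta_s)\) is an object of \(\mathfrak H^{\log,\mathrm{lb}}_{\overline{Y},d}(T)\) mapping to \((\mathcal N,s)\). The compatibility with the forgetful morphism then holds by construction, since \(\Xi\) followed by projection to the first factor sends \((\mathcal N,\vartheta)\) to \(\mathcal N\).

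The main obstacle is the naturality bookkeeping in the second step. One must check that the identification of Proposition~\ref{prop:family-enhancement}(a) is canonical: it must be independent of any choice of local trivialization of \(\mathcal N\), invariant under automorphisms of \(\mathcal N\) (including units of \(\Gamma(\overline{Y}\times T,\OO)\)), and compatible with base change. Only then is \(\Xi\) a genuine morphism of fibered categories rather than a bijection on isomorphism classes. The plan is to phrase everything via the canonical isomorphism \(\sheafHom(\mathcal N,\mathcal N\otimes p_Y^*M)\cong p_Y^*M\) and not via chosen frames, so that these checks become automatic.
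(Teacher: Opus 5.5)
Your route is the paper's route: the same functor $(\mathcal N,\vartheta)\mapsto(\mathcal N,s_\vartheta)$ obtained from Proposition~\ref{prop:family-enhancement}, with the inverse built from part \textup{(a)}. Your check that an isomorphism $f:\mathcal N_1\to\mathcal N_2$ intertwines $\vartheta_1,\vartheta_2$ exactly when $s_{\vartheta_1}=s_{\vartheta_2}$ is a genuine improvement. It holds because multiplication by a section of $p_Y^*M$ commutes with every $\OO$-linear map. The paper only says that $\mathfrak F$ and $\mathfrak G$ are inverse ``by construction'' and never says what they do on morphisms.

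\textbf{The gap.} The gap is in the step you share with the paper. You invoke Proposition~\ref{prop:family-enhancement}\textup{(b)} to conclude that admissibility on geometric fibers makes $s_\vartheta$ factor through $A_Z$; this is what lets $\Xi$ land in $\mathbf{Pic}^d(\overline{Y})\times A_Z$ on objects, and it fails for non-reduced $T$. By Lemma~\ref{lem:representability-A-Delta}, factorization means $(\evres_Z\otimes 1)(s_\vartheta)=b_Z\otimes 1$ in $V_Z\otimes_\CC\Gamma(T,\OO_T)$. Fiberwise admissibility only says that the coordinates of $(\evres_Z\otimes 1)(s_\vartheta)-b_Z\otimes 1$ vanish at every point of $T$, i.e.\ are locally nilpotent, so the two conditions agree only for reduced $T$.

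\textbf{A counterexample.} Take $T=\Spec\CC[\varepsilon]/(\varepsilon^2)$ and $\mathcal N=p_Y^*\mathcal N_0$ with $\mathcal N_0$ of degree $d$. Take $s=s_0\otimes1+s_1\otimes\varepsilon$ with $s_0\in A_Z(\CC)$ and $\evres_Z(s_1)\neq0$. Then $(\mathcal N,\vartheta_s)$ is an object of $\mathfrak H^{\log,\mathrm{lb}}_{\overline{Y},d}(T)$ as defined, but $s\notin A_Z(T)$.

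In fact the fiber of the forgetful morphism over $\mathcal N_0$ is the formal completion of $\underline{V}$ along $A_Z$. Its tangent space at $s_0$ is all of $V$ rather than $\ker(\evres_Z)$. So whenever $\evres_Z\neq0$ and $A_Z(\CC)\neq\varnothing$, no equivalence compatible with the projection exists at all. The statement as written is false in that case, and no bookkeeping about trivializations or base change can rescue it.

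\textbf{The fix.} What is missing is a scheme-theoretic admissibility condition in families. Require the marked condition on $Z\times T$, i.e.\ $(\evres_Z\otimes1)(s_\vartheta)=b_Z\otimes1$, or else restrict to reduced test schemes. With that definition, part \textup{(b)} follows immediately from Lemma~\ref{lem:representability-A-Delta}, and your argument, including your morphism-level check, goes through unchanged.
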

\begin{proof}
For a \(T\)-point \((\mathcal N,\,\vartheta)\) of \(\mathfrak H^{\log,\,\mathrm{lb}}_{\overline{Y},d}\), let
\[
s_\vartheta:T\longrightarrow \underline{V}
\]
be the morphism corresponding to \(\vartheta\) under Proposition~\ref{prop:family-enhancement}\textup{(a)}.  By Proposition~\ref{prop:family-enhancement}\textup{(b)}, the fiberwise admissibility of \(\vartheta\) is equivalent to the factorization of \(s_\vartheta\) through \(A_Z\).  Hence we obtain a functor
\[
\mathfrak F:
\mathfrak H^{\log,\,\mathrm{lb}}_{\overline{Y},d}
\longrightarrow
\mathbf{Pic}^d(\overline{Y})\times A_Z,
\qquad
(\mathcal N,\,\vartheta)\longmapsto (\mathcal N,s_\vartheta).
\]

Conversely, given a \(T\)-point \((\mathcal N,s)\) of \(\mathbf{Pic}^d(\overline{Y})\times A_Z\), compose \(s\) with the closed immersion \(A_Z\hookrightarrow \underline{V}\).  Proposition~\ref{prop:family-enhancement}\textup{(a)} then gives a unique relative enhancement
\[
\vartheta_s:\mathcal N\longrightarrow \mathcal N\otimes p_Y^*M,
\]
and Proposition~\ref{prop:family-enhancement}\textup{(b)} shows that \(\vartheta_s\) is admissible on every geometric fiber.  Thus \((\mathcal N,\,\vartheta_s)\) defines an object of \(\mathfrak H^{\log,\,\mathrm{lb}}_{\overline{Y},d}(T)\).  This gives a functor
\[
\mathfrak G:
\mathbf{Pic}^d(\overline{Y})\times A_Z
\longrightarrow
\mathfrak H^{\log,\,\mathrm{lb}}_{\overline{Y},d}.
\]

The constructions \(\mathfrak F\) and \(\mathfrak G\) are functorial in \(T\), and they are inverse to one another by construction.  This proves \eqref{eq:stack-product}.  The final compatibility with the forgetful morphism is immediate.
\end{proof}

\begin{corollary}\label{cor:algebraicity-enhanced-stack}
The category fibered in groupoids \(\mathfrak H^{\log,\,\mathrm{lb}}_{\overline{Y},d}\) is an algebraic stack.  Moreover, the equivalence \eqref{eq:stack-product} upgrades canonically to an equivalence of algebraic stacks
\[
\mathfrak H^{\log,\,\mathrm{lb}}_{\overline{Y},d}
\;\simeq\;
\mathbf{Pic}^d(\overline{Y})\times A_Z.
\]
\end{corollary}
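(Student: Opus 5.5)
The plan is to deduce algebraicity directly from the equivalence of Theorem~\ref{thm:stack-product}. Algebraicity is a property of categories fibered in groupoids that is invariant under equivalence. It therefore suffices to show that \(\mathbf{Pic}^d(\overline{Y})\times A_Z\) is an algebraic stack over \(\Spec\CC\).

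First, \(\overline{Y}\) is an integral projective curve over \(\CC\). It is proper and flat over \(\Spec\CC\), with \(H^0(\overline{Y},\OO_{\overline{Y}})=\CC\). By the standard representability results for Picard stacks (Artin, or via Grothendieck's Picard scheme), \(\mathbf{Pic}(\overline{Y})\) is an algebraic stack locally of finite type. In fact it is a \(\mathbb G_m\)-gerbe over the Picard scheme \(\operatorname{Pic}(\overline{Y})\), which exists because \(\overline{Y}\to\Spec\CC\) has a section. The degree-\(d\) part is open and closed, since the degree of a line bundle is locally constant in flat families. So \(\mathbf{Pic}^d(\overline{Y})\) is an algebraic stack.

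Second, \(A_Z\) is by Definition~\ref{def:fixed-affine-scheme} a fiber product of affine schemes of finite type over \(\CC\). Hence it is an affine scheme: concretely, either empty or an affine space modeled on \(H^0(\overline{Y},\varphi^*L(-Z_D))\) by Lemma~\ref{lem:evres-kernel-new}. By Lemma~\ref{lem:representability-A-Delta}, its functor of points is the one used in Theorem~\ref{thm:stack-product}. A scheme, viewed as a category fibered in groupoids, is an algebraic stack. The product of two algebraic stacks over \(\Spec\CC\) is again algebraic: the diagonal of the product is a product of representable diagonals, and a smooth atlas is given by the product of an atlas \(U\to\mathbf{Pic}^d(\overline{Y})\) with \(A_Z\) itself. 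Hence \(\mathbf{Pic}^d(\overline{Y})\times A_Z\) is algebraic.

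Finally, transport the structure. Both sides of \eqref{eq:stack-product} must satisfy fpqc/étale descent, so the equivalence of fibered categories becomes an equivalence of stacks. This is the only point requiring care, and I expect it to be the main obstacle. I would show that \(\mathfrak H^{\log,\mathrm{lb}}_{\overline{Y},d}\) is a stack directly:
\begin{itemize}
\item line bundles on \(\overline{Y}\times T\) descend;
\item morphisms \(\mathcal N\to\mathcal N\otimes p_Y^*M\) descend, since they are sections of a quasi-coherent sheaf;
\item the fiberwise admissibility condition is checked on geometric points, so it is local on \(T\).
\end{itemize}
Alternatively, an equivalence of categories fibered in groupoids with a stack already forces the source to be a stack. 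The functors \(\mathfrak F,\mathfrak G\) are canonical and compatible with pullback, so they give inverse 1-morphisms of stacks. This yields the canonical equivalence of algebraic stacks, compatible with the projection to \(\mathbf{Pic}^d(\overline{Y})\).
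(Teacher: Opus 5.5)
Your proposal is correct given Theorem~\ref{thm:stack-product} and follows essentially the paper's own proof: algebraicity of \(\mathbf{Pic}^d(\overline{Y})\), of the affine scheme \(A_Z\), and of their product, then transport along the equivalence, only with more detail; the separate descent check for \(\mathfrak H^{\log,\mathrm{lb}}_{\overline{Y},d}\) is redundant, as you observe, since being an algebraic stack is invariant under equivalence. One caveat applies equally to the paper: the equivalence needs admissibility in the definition of \(\mathfrak H^{\log,\mathrm{lb}}_{\overline{Y},d}\) to be imposed scheme-theoretically over \(T\), i.e.\ \((\evres_Z\otimes 1)(s_\vartheta)=b_Z\otimes 1\) in \(V_Z\otimes_{\CC}\Gamma(T,\OO_T)\), because over a non-reduced base such as \(T=\Spec\CC[\varepsilon]/(\varepsilon^2)\) the geometric-fiber condition you invoke only forces \(s_\vartheta\) to land set-theoretically in \(A_Z\), which gives the formal completion of \(\underline{V}\) along \(A_Z\) rather than \(A_Z\), and that completion is not algebraic when \(\evres_Z\neq 0\) and \(A_Z\neq\varnothing\).
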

\begin{proof}
The Picard stack \(\mathbf{Pic}^d(\overline{Y})\) is algebraic, and \(A_Z\) is an affine \(\CC\)-scheme.  Hence \(\mathbf{Pic}^d(\overline{Y})\times A_Z\) is an algebraic stack.  The claim follows from Theorem~\ref{thm:stack-product}.
\end{proof}

\end{document}